\newcommand{\R}{\mathbb R}
\newtheorem{theorem}{Theorem}
\newtheorem{Thm}{Theorem}[section]
\newtheorem{Lem}[Thm]{Lemma}
\newtheorem{Rem}[Thm]{Remark}
\numberwithin{equation}{section}
\begin{document}
\setlength{\baselineskip}{1.2\baselineskip}
\title[Uniform Gradient Bounds and Convergence of Mean Curvature Flows]{%A Mean Curvature Flow with Prescribed Contact Angles in a High Dimensional Cylinder$^\S$
Uniform Gradient Bounds and Convergence of  Mean
Curvature Flows in a Cylinder$^\S$}
\thanks{$\S$ This research was partly supported by the NSFC (No. 12071299, 11601311).}
\author[Z. Gao, B. Lou and J. Xu]{Zhenghuan Gao$^{\dag}$, Bendong Lou$^\ddag$ and Jinju Xu$^{\ddag, *}$}
\thanks{$\dag$ Department of Mathematics, Shanghai University, Shanghai 200444, China}
\thanks{$\ddag$ Mathematics and Science College, Shanghai Normal University, Shanghai 200234, China.}
\thanks{$*$ Corresponding author.}
\thanks{{\bf Emails:} {\sf gzh2333@mail.ustc.edu.cn} (Z. Gao),  {\sf lou@shnu.edu.cn} (B. Lou),  {\sf jjxu@shnu.edu.cn} (J. Xu)}
\date{}

\begin{abstract}
We consider a mean curvature flow $V=H+A$ in a cylinder $\Omega\times \R$, where, $\Omega$ is a bounded domain in $\R^n$, $A$ is a constant driving force, $V$ and $H$ are the normal velocity and the mean curvature respectively of a moving hypersurface, which contacts the cylinder boundary with prescribed angle $\theta(x)$. Under certain conditions such as $\Omega$ is convex and $\|\cos\theta\|_{C^2}$ is small, or $\Omega$ is {\it non-convex} and $|A|$ is large, we derive the {\it uniform gradient bounds} for bounded and unbounded solutions (which is crucial in the study of the asymptotic behavior of the solutions). Then we present a trichotomy result on the convergence of the solutions as well as its criterion: when $A|\Omega|+\int_{\partial \Omega} \cos\theta(x) d\sigma>0$ (resp. $=0$, $<0$), the solution converges as $t\to \infty$ to a translating solution with positive speed (resp. stationary solution, a translating solution with negative speed).
\end{abstract}

\subjclass[2010]{35K93, 35B40, 35B45, 35C07}
\keywords{Mean curvature flow, uniform gradient bound, asymptotic behavior, translating solution, cylinder with non-convex section.}
\maketitle

\baselineskip 17pt

\section{Introduction}

Let $\Omega$ be a bounded domain containing the origin in $\R^n$ with $C^3$ boundary $\partial \Omega$, and let $\theta(x)$ be a $C^2$ function over $\overline{\Omega}$ with
$0<\theta (x)<\pi$. We consider a mean curvature flow in the cylinder $\Omega\times \R$ with prescribed contact angles $\theta(x)$ on $\partial \Omega\times \R$:
\begin{equation}\label{MCF}
\left\{
 \begin{array}{ll}
   V=H+A & \mbox{on } \Gamma_t \subset \Omega\times \R,\\
  {\bf n}\cdot (\gamma,0) = \cos \theta(x) & \mbox{on}\,\,  \overline{\Gamma}_t \cap (\partial \Omega\times \R),
  \end{array}
  \right.
\end{equation}
where $\Gamma_t\ (t\geq 0)$ is a family of hypersurfaces moving in the cylinder $\overline{\Omega}\times \R$, $V$
and $H$ denote the normal velocity and the mean curvature of $\Gamma_t$, respectively, $A$ is a real number denoting a driving force,
${\bf n}$ denotes the upward unit normal vector to $\Gamma_t$ and $\gamma$ denotes the inner unit normal to $\partial \Omega$. Then $H=-{\rm div} {\bf n}$.
In the case that $\Gamma_t$ is the graph of a function $y=u(x,t)$, the problem \eqref{MCF} can be converted into the following problem:
\begin{equation}\label{main-equ1}
\left\{
\begin{array}{ll}
\displaystyle u_t = a^{ij}(Du) D_{ij} u +A\sqrt{1+|Du|^2}, & (x,t)\in \Omega\times [0,\infty), \\
\displaystyle \frac{\partial u}{\partial \gamma}  =  -\cos\theta(x) \sqrt{1+|Du|^2}, &  (x,t)\in \partial \Omega \times [0, \infty),\\
 \displaystyle \frac{\partial u_0}{\partial \gamma} = -\cos\theta(x)\sqrt{1+|Du_0|^2}, & x\in \partial \Omega,
\end{array}
\right.
\end{equation}
where $Du=(D_1 u, \cdots, D_n u)$,
$$
a^{ij}(p) := \delta_{ij} - \frac{p_i p_j } {1+|p|^2} \quad\mbox{ for } p\in \R^n,
$$
and $u_0 \in C^\infty(\overline{\Omega})$ denotes the initial data.
The problem \eqref{MCF} or \eqref{main-equ1} describes the evolution of the graph of $u(\cdot,t)$
by its mean curvature (together with a driving force $A$) in the normal direction with prescribed contact angle $\theta$ on the boundary.

The problem \eqref{main-equ1} without driving force (i.e. $A=0$) has been studied by many authors since 1980s.
 To name only a few, Altschuler and Wu \cite{AW} considered the case $n=2$ and proved that when $\Omega$
 is strictly convex and $|D_\mathcal{T} \theta|\leq \min\kappa(x)$, where $\kappa(x)$  denotes the
 curvature of $\partial \Omega$ at $x$, the solution either converges as $t\to \infty$ to a minimal surface
  (when $\int_{\partial \Omega} \cos\theta (x) d\sigma =0)$, or to a translating solution. For higher dimensional cases,
   Huisken \cite{Huisken} considered the case $\theta(x)\equiv \frac{\pi}{2}$, and proved that the solution remains smooth,
   and converges as $t\to \infty$ to a minimal surface.
Gao, Ma, Wang and Weng \cite{GMWW} considered the problem with nearly vertical contact angle condition and proved
that when $\Omega$ is strictly convex, the solution  converges  to a translating solution.
Recently, Ma, Wang and Wei \cite{MWW} derived the uniform gradient estimates for the problem without the driving force and
with Neumann boundary condition $\frac{\partial u}{\partial \gamma}=\phi(x)$, and proved that when $\Omega$ is strictly convex,
the solution  converges  to a translating solution.
Wang, Wei and Xu \cite{WWX} generalized the results in \cite{MWW} to a kind of general capillarity-type boundary conditions
which does not include the prescribed contact angle case. In a recent long paper, Giga, Onoue and Takasao \cite{GOT}
 even derived a mean curvature flow with an interesting type of dynamic boundary conditions by taking limit for the Allen-Cahn equation.

Mean curvature flows with driving forces arise in the fields of geometry analysis (cf. \cite{Ang1, ChouZhu} etc.), and in the singular limit problems for reaction diffusion equations such as Allen-Cahn equation (cf. \cite{AHM,HKMN}, where the driving force describes the slight difference between two stable phases).
  The asymptotic behavior for the solutions to such equations has also been studied by some authors.
  For example, in case $n=1$, Angenent \cite{Ang1}, Chou and Zhu \cite{ChouZhu} studied the shrinking plane curves driven by $V=a({\bf n}) H + b({\bf n})$. Nara, Ninomiya and Taniguchi \cite{Nara-Tani, Ni-Tani} considered
  the existence and stability of the V-shaped translating solutions to $V=H+A$. Matano, Nakamura and Lou \cite{LMN, MNL}
  studied the translating solutions to the same equation in two dimensional periodic or almost periodic band domains.
In higher dimensional case, Guan \cite{Guan} considered the following general equation
\begin{equation}\label{guan-equ}
u_t = a^{ij}(Du) D_{ij} u -\psi(u,Du), \quad (x,t)\in \Omega\times [0,\infty).
\end{equation}
He gave {\it time-dependent gradient estimates} for the solutions. In the special case where $\psi(u, p) = ku\sqrt{1+|p|^2}$ with $ k > 0$,
or $\psi(u, p) = -n/u$ with $u > 0$, he showed that the $C^1$ estimates of the solutions are independent of the time (that is,
the solutions are bounded ones), and then proved the convergence of such solutions to stationary ones.

In this paper we study the asymptotic behavior for the solutions to \eqref{main-equ1}. Note that in our problem, the equation involves a driving force term, the cross section $\Omega$ of the cylinder can be convex or {\it non-convex}, and the solutions can be bounded or unbounded.

To study the quasilinear equation in \eqref{main-equ1}, a crucial step is to give a priori bounds like
$$
\|u\|_{C(\overline{\Omega}\times [0,T])} \leq C_0(T),\qquad
\|Du\|_{C(\overline{\Omega}\times [0,T])} \leq C_1(T).
$$
In many cases, when the $C^0$ bound is dependent on (resp. independent of) $T$, the gradient bound is also
dependent on (resp. independent of) $T$. To guarantee the well-posedness on the time interval $[0,T]$,
the time-dependent bounds $C_0 (T)$ and $C_1(T)$ are sufficient. However,
to study the asymptotic behavior for the solutions, one generally needs {\it uniform-in-time bounds}.
More precisely, in order to study the convergence of $u(x,t)-u(0,t)$ (no matter $u$ is bounded or not),
one usually establishes the $C^{2+\alpha}$ bounds by using the gradient bounds. The important thing is that,
 to derive a convergence result these bounds must be independent of the time, or uniform-in-time. In this sense,
  we say that the unform-in-time gradient bounds are of special importance in the qualitative study for \eqref{main-equ1},
  which are generally not easy, especially for unbounded solutions in high dimensional cylinder with a general cross section.
  The following theorem gives such a result for the problem \eqref{main-equ1}.

\begin{theorem}[{\bf Uniform-in-time gradient bound}]\label{thm:est}
Assume $\Omega\subset \R^n$ is a bounded domain with $\partial \Omega\in C^3$,
$\theta(x)\in C^2 (\overline{\Omega})$ with $0<\theta(x)<\pi$.
Let $u$ be a classical solution to \eqref{main-equ1} in the time interval $[0,T)$ for some $T\in (0,\infty]$. Then
there exists a positive number $C_1$ depending only on $u_0$ such that
$$
\sup\limits_{\Omega\times [0,T)} |u_t| \leq C_1.
$$
Moreover, there exists a positive number $C_2$ independent of $t$ and $T$ such that
\begin{equation}\label{C1-est}
\sup\limits_{\Omega\times [0,T)} |Du| \leq C_2,
\end{equation}
if one of the following conditions holds:
\begin{itemize}
\item[(i)] $n=2$ and the positive curvature $\kappa(x)$ of $\partial \Omega$ satisfies $ \kappa(x) > |A|+ \|D\theta \|_{C(\overline{\Omega})} $;
\item[(ii)] $n\geq2,\ \Omega$ is strictly convex and $\|\cos \theta\|_{C^2(\overline{\Omega})}\ll 1$;
\item[(iii)] $n\geq 2$ and $|A|\gg 1$.
\end{itemize}
\end{theorem}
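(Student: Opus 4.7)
The statement splits into an $L^\infty$-bound for $u_t$ and a uniform $L^\infty$-bound for $|Du|$, and I treat them in that order.

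\emph{Bound on $u_t$.} The plan is to differentiate \eqref{main-equ1} in $t$ and apply the parabolic maximum principle to $v:=u_t$. Differentiating the PDE yields a linear uniformly parabolic equation $v_t=a^{ij}(Du)D_{ij}v+b^{k}(Du,D^2u)D_k v$, and differentiating the capillarity condition gives the oblique boundary condition
\[
\partial_\gamma v \;=\; \frac{-\cos\theta(x)\,D_k u}{\sqrt{1+|Du|^2}}\,D_k v\qquad\text{on }\partial\Omega\times[0,T).
\]
Since $|\cos\theta|<1$, obliqueness is preserved, and the maximum principle together with Hopf's lemma gives $\sup|u_t|\leq\|u_t(\cdot,0)\|_{L^\infty}$; the right-hand side is controlled via the PDE at $t=0$ by $u_0$ alone, yielding $C_1=C_1(u_0)$.

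\emph{Setting up the gradient bound.} With $w:=\sqrt{1+|Du|^2}$, I intend to bound $w$ by an auxiliary function of the form
\[
\Phi(x,t) \;=\; \log w \;+\; \phi(d(x)) \;+\; \psi(u),
\]
where $d$ is a $C^3$ extension of $\mathrm{dist}(\cdot,\partial\Omega)$ to $\overline{\Omega}$, and $\phi,\psi$ are scalar functions adapted to each case (i)--(iii). Differentiating the PDE produces an evolution equation for $\log w$; at an interior maximum of $\Phi$ one has $D\Phi=0$ and $\Phi_t\geq a^{ij}D_{ij}\Phi$, and substituting $a^{ij}D_{ij}u=u_t-Aw$ (using $|u_t|\leq C_1$ from the previous step) reduces the first-order $A$-contribution to lower order. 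For appropriate $\phi,\psi$, this already gives a bound $w\leq C_2$ at any interior maximum of $\Phi$.

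\emph{Boundary analysis---the heart of the argument.} The real work is the case where $\Phi$ attains its maximum at some $(x_0,t_0)\in\partial\Omega\times[0,T)$, where Hopf's lemma forces $\partial_\gamma\Phi(x_0,t_0)\geq 0$. From $D_\gamma u=-\cos\theta\,w$, two tangential differentiations on $\partial\Omega$ express the relevant second derivatives of $u$ in terms of the second fundamental form $\mathrm{II}_{\partial\Omega}$, tangential derivatives of $\cos\theta$, and $w$; combining this with the trace of the PDE on $\partial\Omega$, the inequality $\partial_\gamma\Phi\geq 0$ reduces to a relation of the schematic form
\[
-\,\mathrm{II}_{\partial\Omega}\text{-term}(w)\;+\;A\cos\theta\;+\;O(\|D\theta\|_{C^0})\;+\;(\text{lower order from }\phi'(0),\psi')\;\geq\;0.
\]
A contradiction, and hence a bound on $w(x_0,t_0)$, is extracted in each case: under (i), $n=2$ collapses the curvature term to $\kappa\sin^2\theta\cdot w$, and $\kappa>|A|+\|D\theta\|_{C^0}$ makes the expression strictly negative for $w$ large; under (ii), strict convexity gives $\mathrm{II}_{\partial\Omega}\geq c_0>0$, while the remaining boundary contributions (all carrying a factor of $\cos\theta$ or its derivatives) are too small when $\|\cos\theta\|_{C^2}\ll 1$; under (iii), the curvature term may have the wrong sign for non-convex $\Omega$, but choosing $\phi$ with $\phi'(0)$ of the sign of $-A$ produces a dominant boundary contribution $|A|\phi'(0)w$ that defeats the (bounded) curvature defect once $|A|\gg 1$.

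\emph{Main obstacle.} The delicate step is the boundary computation: tangentially differentiating $D_\gamma u=-\cos\theta\,w$ produces many cross terms involving $D^2 u$ and the extrinsic geometry of $\partial\Omega$, and the functions $\phi,\psi$ in $\Phi$ must be re-tuned so that the resulting boundary inequality can be contradicted under each of the structurally different hypotheses (i)--(iii). Case (iii) is conceptually the novel one, since non-convexity of $\partial\Omega$ is compensated not by geometry but by the boundary contribution of $\phi(d)$ scaled by the large parameter $|A|$.
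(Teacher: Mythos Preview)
Your plan for the $u_t$-bound matches the paper's argument (its Lemma~3.1) essentially verbatim. The gradient-bound scheme is also broadly right for cases (i) and (ii): the paper uses $|Du|^2$ directly in case (i) and an auxiliary function $\log w+\alpha\tilde h$ with $w=v-u_k\tilde h_k\cos\theta$ in case (ii), but the boundary mechanism you describe---tangentially differentiating $u_\gamma=-\cos\theta\,v$, using the equation to eliminate the normal--normal second derivative, and extracting a curvature term---is exactly what happens there.

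The genuine gap is in case (iii): you have the roles of the interior and boundary analyses reversed. In the paper the auxiliary function is $\varphi=\log w+\alpha_0 d$ with the \emph{specific} choice $w=v+(Du\cdot Dd)\cos\theta$, and the boundary case is disposed of \emph{uniformly in $A$}: one computes $\partial_\gamma\varphi\ge \alpha_0-C(n,\Omega,\theta)$ at a boundary maximum, so choosing $\alpha_0$ large (depending only on the geometry and on $\theta$) forces a contradiction regardless of $A$ and regardless of convexity. The hypothesis $|A|\gg 1$ is spent entirely at an \emph{interior} maximum: the elaborate computation of $a^{ij}\varphi_{ij}-\varphi_t$ there produces, after minimizing a quadratic form in the $u_{\beta\beta}$ and combining with the term $(Av-u_t)^2/v$, a leading contribution $\tfrac{1}{n}A^2 v$ which, for $|A|$ sufficiently large, dominates the $O(v)$ error terms (whose constants involve the already-fixed $\alpha_0$). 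Your proposal claims instead that the interior case is routine and that $|A|\gg1$ enters at the boundary through a term ``$|A|\phi'(0)w$''; but no such product arises---$\phi'(0)$ enters $\partial_\gamma\Phi$ as an additive constant, not scaled by $A$ or $w$---and without the $A^2v$ mechanism the interior inequality cannot absorb the errors generated by the large $\alpha_0$. A secondary point: including a $\psi(u)$ term in the barrier is dangerous here, since $u$ is typically unbounded in $t$ and the estimate is supposed to be uniform in time.
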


\begin{Rem}\rm
Note that the gradient bound in \eqref{C1-est} is a {\it uniform-in-time} one,
which, as we have mentioned above, is crucial in the study of the convergence of the solutions. Such uniform gradient bounds were also obtained in
\cite{AW,GMWW,MWW} etc. for unbounded solutions to equations without driving force and in convex cylinders.
However, in (iii) of the previous theorem, we do not need the convexity for the cylinder. As far as we know,
there are no much results on the uniform gradient bounds for unbounded solutions in non-convex cylinders.
In \cite[Theorem 4]{Guan}, Guan considered a general equation \eqref{guan-equ} without the convexity assumption for $\Omega$, but the gradient
bounds he obtained for unbounded solutions are time-dependent ones, which are not applied to the
qualitative study for unbounded solutions (see (1.4) and the beginning of Section 3 in \cite{Guan}).
\end{Rem}

The gradient bound in the above theorem not only guarantees the global well-posedness, but also helps us to derive the convergence for the solutions. Denote
\begin{equation}\label{def:I}
I:= A|\Omega|+\int_{\partial \Omega} \cos\theta(x) d\sigma.
\end{equation}
According to the sign of $I$, we will prove a trichotomy result  on the asymptotic behavior of the solutions.

\begin{theorem}[{\bf Trichotomy and its criterion on the asymptotic behavior}]\label{thm:asy}
Assume the hypotheses in the previous theorem hold.
\begin{itemize}
\item[(i)] If $I=0$, then the problem \eqref{main-equ1} has a (unique up to a shift) stationary solution $V(x)$. Any solution $u$ to \eqref{main-equ1} converges as $t\to \infty$ to this stationary solution (with suitable shift);
\item[(ii)] If $I>0$, then the problem \eqref{main-equ1} has a (unique up to a shift) translating solution $\Phi(x)+ct$ with $c>0$. Any solution $u$ to the problem \eqref{main-equ1} converges as $t\to \infty$ to this translating solution (with suitable shift);
\item[(iii)] If $I<0$, then the problem \eqref{main-equ1} has a (unique up to a shift) translating solution $\Phi(x)+ct$ with $c<0$. Any solution $u$ to \eqref{main-equ1} converges as $t\to \infty$ to this translating solution (with suitable shift).
\end{itemize}
\end{theorem}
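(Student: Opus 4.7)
The plan is to exploit the uniform bounds $|u_t|\le C_1$ and $|Du|\le C_2$ furnished by Theorem~\ref{thm:est}, together with an integral identity that singles out $I$ as the natural invariant, to reduce the whole trichotomy to the single statement that $u_t(x,t)\to c$ uniformly on $\overline\Omega$ as $t\to\infty$, with $\mathrm{sgn}\,c=\mathrm{sgn}\,I$. The invariant identity is obtained by dividing the PDE in \eqref{main-equ1} by $\sqrt{1+|Du|^2}$, which turns it into $\mathrm{div}(Du/\sqrt{1+|Du|^2})+A=u_t/\sqrt{1+|Du|^2}$, and integrating over $\Omega$ using the contact-angle boundary condition; the result is
\begin{equation*}
\int_\Omega \frac{u_t(x,t)}{\sqrt{1+|Du(x,t)|^2}}\,dx = A|\Omega|+\int_{\partial\Omega}\cos\theta(x)\,d\sigma = I
\end{equation*}
for every $t\ge 0$, which already makes plausible that $I$ governs the asymptotic speed.

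Differentiating \eqref{main-equ1} in $t$, the time derivative $v:=u_t$ satisfies a linear uniformly parabolic equation with the homogeneous oblique boundary condition $\partial_\gamma v + \cos\theta\,(Du\cdot Dv)/\sqrt{1+|Du|^2}=0$ on $\partial\Omega$, whose obliqueness coefficient along $\gamma$ is $\sin^2\theta>0$, bounded away from zero by $0<\theta<\pi$. The parabolic strong maximum principle and Hopf's lemma at oblique boundary points then force $\max_{\overline\Omega}u_t(\cdot,t)$ to be non-increasing and $\min_{\overline\Omega}u_t(\cdot,t)$ to be non-decreasing, so the limits $M:=\lim_{t\to\infty}\max u_t$ and $m:=\lim_{t\to\infty}\min u_t$ exist. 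To upgrade this to $M=m$, I use uniform $C^{2+\alpha,1+\alpha/2}$ bounds obtained by combining Theorem~\ref{thm:est} with standard parabolic Schauder/Krylov estimates: along any sequence $t_k\to\infty$, the translates $u(\cdot,\cdot+t_k)-u(0,t_k)$ are precompact in $C^{2,1}_{\mathrm{loc}}(\overline\Omega\times\R)$ and, after extracting a subsequence, converge to an entire solution $U$ of \eqref{main-equ1} on $\R$. Its time derivative $U_t$ is a bounded ancient solution of the same linear oblique equation that attains the extrema $M$ and $m$, so the strong maximum principle applied on $(-\infty,0]$ forces $U_t$ to be constant, giving $M=m=:c$. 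Passing to the limit in the invariant identity yields $c\int_\Omega dx/\sqrt{1+|D\Phi|^2}=I$ for any limit profile $\Phi$, so $c$ and $I$ share the same sign, which settles the trichotomy for the asymptotic speed.

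It then remains to identify the profile and close the loop. Setting $\tilde u(x,t):=u(x,t)-ct$, the uniform convergence $u_t\to c$ together with the equi-continuity above produces, along a subsequence, a $C^2$ limit $\Phi$ of $\tilde u(\cdot,t_k)-\tilde u(0,t_k)$ that satisfies $a^{ij}(D\Phi)D_{ij}\Phi+A\sqrt{1+|D\Phi|^2}=c$ with the original contact-angle condition; this is the desired translating (or stationary, when $c=0$) solution. Uniqueness up to a vertical shift, and independence of the limit from the subsequence, follow from the parabolic comparison principle applied to \eqref{main-equ1}: if two such profiles had different speeds, sliding one vertically below the other at $t=0$ and applying comparison forces a contradiction; if the speeds coincide, the strong maximum principle applied to their difference forces it to be constant. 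The same comparison argument, applied between $u$ and the translating/stationary solution, identifies the shift for the convergence of $u$ itself. \emph{The main obstacle} is the extraction of the ancient limit $U$ and the resulting identity $M=m$: this is exactly where the uniform-in-time (rather than merely time-dependent) character of Theorem~\ref{thm:est} is indispensable, and is the obstruction that prevents the time-dependent estimates of \cite{Guan} from covering unbounded solutions in non-convex cylinders.
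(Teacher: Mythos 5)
Your proposal is correct but takes a genuinely different, more unified route than the paper. The paper splits the analysis into bounded solutions (handled via the Lyapunov functional $E[\psi]=\int_\Omega(\sqrt{1+|D\psi|^2}-A\psi)\,dx+\int_{\partial\Omega}\psi\cos\theta\,d\sigma$, yielding quasi-convergence to a stationary solution, then full convergence by uniqueness) and unbounded solutions (handled by constructing an entire solution from translates and proving its uniqueness through an explicit sliding argument with the functionals $L(t),D(t)$, then identifying the translating wave); these are then glued through the exhaustive trichotomy (A1)/(B1)/(C1). Your approach instead reduces everything to the single statement $u_t(\cdot,t)\to c$ uniformly, obtained from (a) the monotonicity of $\max_{\overline\Omega}u_t$ and $\min_{\overline\Omega}u_t$ via SMP/Hopf with the oblique boundary condition (this is precisely the content of the paper's Lemma~\ref{Lem1.1}, applied on shifted time intervals), (b) compactness to an ancient limit $U$, and (c) a Liouville-type argument on $U_t$. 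The invariant identity $\int_\Omega u_t/\sqrt{1+|Du|^2}\,dx=I$, valid for every $t$, then gives $\mathrm{sgn}(c)=\mathrm{sgn}(I)$ in the limit. What each approach buys: the paper's route makes the gradient-flow structure of the bounded case explicit and is arguably more self-contained; your route is shorter, treats all three cases on the same footing, avoids both the energy functional and the $L(t),D(t)$ sliding machinery, and makes the role of $I$ as a conserved quantity transparent. Both hinge essentially on the uniform-in-time $C^1$ bound of Theorem~\ref{thm:est} for the compactness step — this is precisely where time-dependent estimates would fail. One point to spell out when finalizing: the step "the strong maximum principle applied on $(-\infty,0]$ forces $U_t$ to be constant" compresses several substeps — one must first show $\sup_x U_t(x,s)=M$ and $\inf_x U_t(x,s)=m$ for every $s$ (which follows from $C^{2,1}_{\mathrm{loc}}$ convergence of the translates together with the monotone limits of the extrema), then rule out interior attainment of $M$ (else SMP forces $U_t\equiv M$ for past times, contradicting $m<M$) and boundary attainment (else Hopf gives $\partial_\gamma U_t<0$ while the differentiated boundary condition at a spatial maximum yields $\sin^2\theta\,\partial_\gamma U_t=0$, a contradiction since $\sin\theta$ is bounded away from zero). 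This argument is sound but requires the uniform ellipticity and bounded lower-order coefficients of the linearized problem, which again come from Theorem~\ref{thm:est} and interior-up-to-boundary Schauder estimates.
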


\begin{Rem}\rm
From the above theorems we see that (a) the cross section $\Omega$ of the cylinder in our problem can be a non-convex one;
(b) our equation involves a driving force term $A$; (c) the gradient bounds we obtained are uniform-in-time ones;
(d) the trichotomy of the asymptotic behavior for the solutions is characterized simply by the sign of $I$.
\end{Rem}

The proof of Theorem 1 needs very careful calculations and is rather technical, we postpone it to Section 3. In Section 2 we study the
asymptotic behavior for the solutions to \eqref{main-equ1}. In the first subsection we consider bounded solutions and their convergence to stationary ones,
in the second subsection we consider unbounded solutions and their convergence to translating solutions, and finally,
in the last subsection we complete the proof of Theorem \ref{thm:asy}.

\section{Asymptotic Behavior for the Solutions to \eqref{main-equ1}}

In this section we always assume that the hypotheses in Theorem \ref{thm:est} hold.
Then the problem \eqref{main-equ1} with any smooth initial data has a classical global solution, which has the uniform-in-time gradient bound as in \eqref{C1-est}.
We will study the asymptotic behavior for such solutions and prove Theorem \ref{thm:asy}.

Clearly, there are exactly three possibilities for the solutions to \eqref{main-equ1}:
\begin{itemize}
\item[({\bf A1})] all the global solutions to \eqref{main-equ1} are bounded;
\item[({\bf B1})] there is a global solution to \eqref{main-equ1} which is unbounded from above;
\item[({\bf C1})] there is a global solution to \eqref{main-equ1} which is unbounded from below.
\end{itemize}
We will show that these possibilities correspond respectively to the following statements:
\begin{itemize}
\item[({\bf A2})] the problem \eqref{main-equ1} has a unique (up to  a shift) stationary solution $V(x)$;
\item[({\bf B2})] the problem \eqref{main-equ1} has a unique (up to  a shift) translating solution $\Phi(x)+ct$ with $c>0$;
\item[({\bf C2})] the problem \eqref{main-equ1} has a unique (up to  a shift) translating solution $\Phi(x) +ct$ with $c<0$.
\end{itemize}
Moreover, each possibility can be characterized simply by the sign of $I$.

In the first subsection we consider bounded solutions, in the second subsection we consider unbounded solutions, and in the last subsection we complete the proof for Theorem \ref{thm:asy}.

\subsection{Bounded solutions converge to stationary solutions}

We first prove a lemma.

\begin{Lem}\label{lem:ss}
Assume the hypotheses in Theorem \ref{thm:est} hold. If the problem \eqref{main-equ1} has a bounded classical solution $u$,
then \eqref{main-equ1} has a unique (up to  a shift) stationary solution $V(x)$.
\end{Lem}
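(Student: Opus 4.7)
The plan is to construct a stationary solution as the long-time limit of the bounded orbit of $u$, and then to establish uniqueness up to a vertical shift via the strong maximum principle together with Hopf's boundary point lemma. First, combining the hypothesis that $u$ is bounded with the uniform-in-time bounds $|u_t|\le C_1$ and $|Du|\le C_2$ from Theorem~\ref{thm:est}, parabolic Schauder theory for the quasilinear oblique problem \eqref{main-equ1} yields a uniform $C^{2+\alpha,1+\alpha/2}$ estimate on $\overline\Omega\times[1,\infty)$. Hence the orbit $\{u(\cdot,t)\}_{t\ge 1}$ is precompact in $C^2(\overline\Omega)$, and along any $t_n\to\infty$ one can extract a subsequence with $u(\cdot,t_n)\to V$ in $C^2(\overline\Omega)$.

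To identify such a limit $V$ as a stationary solution, I would use the capillary-type Lyapunov functional
\begin{equation*}
E(u) := \int_\Omega \sqrt{1+|Du|^2}\,dx - A\int_\Omega u\,dx - \int_{\partial\Omega} u\cos\theta\,d\sigma.
\end{equation*}
A direct integration by parts, in which the boundary contribution from $\int_\Omega Du\cdot Du_t/\sqrt{1+|Du|^2}\,dx$ is cancelled by the contact-angle condition in \eqref{main-equ1}, gives
\begin{equation*}
\frac{d}{dt}E(u(\cdot,t)) = -\int_\Omega \frac{u_t^2}{\sqrt{1+|Du|^2}}\,dx \le 0.
\end{equation*}
Since $u$ and $|Du|$ are bounded, $E(u(\cdot,t))$ is bounded below, so $\int_0^\infty\!\int_\Omega u_t^2\,dx\,dt<\infty$. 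Differentiating \eqref{main-equ1} in $t$ and applying parabolic regularity to the linear oblique problem satisfied by $u_t$ produces a uniform H\"older bound on $u_t$, which upgrades this integrability to $u_t(\cdot,t_n)\to 0$ uniformly along some sequence $t_n\to\infty$; the corresponding limit $V$ then solves the stationary version of \eqref{main-equ1}.

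For uniqueness up to a vertical shift, given stationary solutions $V_1,V_2$, I would set $c^* := \min_{\overline\Omega}(V_2-V_1)$ and $w := V_2-V_1-c^*$, so that $w\ge 0$ with $w(x_0)=0$ at some $x_0\in\overline\Omega$. Subtracting the two stationary equations and applying the mean value theorem along the segment joining $DV_1$ and $DV_2$ produces a linear uniformly elliptic equation $a^{ij}(DV_1)D_{ij}w+b^kD_kw=0$ in $\Omega$. The same procedure applied to the boundary conditions yields $\partial_\gamma w + \cos\theta(x)\,\beta(x)\cdot Dw = 0$ on $\partial\Omega$ with $|\beta|<1$; since $|\cos\theta|<1$, the obliqueness coefficient $1+\cos\theta\,(\beta\cdot\gamma)$ is bounded away from zero. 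The strong maximum principle (if $x_0\in\Omega$) or Hopf's boundary point lemma (if $x_0\in\partial\Omega$) then forces $w\equiv 0$, so $V_2\equiv V_1+c^*$. The main obstacle is the middle step: converting the space-time dissipation bound into pointwise vanishing of $u_t$ along a subsequence relies on uniform-in-time H\"older control of $u_t$ for the linear oblique problem it solves; once that is in hand, every remaining piece is a routine consequence of standard elliptic comparison.
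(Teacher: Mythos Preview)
Your argument is correct and follows essentially the same strategy as the paper: the capillary Lyapunov functional forces $u_t\to 0$ along a subsequence so that any $\omega$-limit is stationary, and uniqueness up to a vertical shift is obtained from the strong maximum principle together with Hopf's lemma applied to the difference of two stationary solutions. The paper works in a slightly weaker $H^1$ framework---passing to the limit in the weak formulation and then invoking elliptic regularity---rather than using $C^{2+\alpha}$ Schauder compactness directly, but this is only a minor technical variant of the same idea.
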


\begin{proof} We first use the Lyapunov functional to derive a quasi-convergence result, that is, any $\omega$-limit of $u(\cdot,t)$ is a stationary solution to \eqref{main-equ1}. Precisely, for any $\psi\in H^1(\Omega)$, define
$$
E[\psi] := \int_{\Omega} \Big( \sqrt{1+|D\psi|^2} -A\psi\Big) dx + \int_{\partial \Omega} \psi(x) \cos\theta(x) d \sigma.
$$
By our assumption, $u$ is a bounded classical solution to \eqref{main-equ1}. So $E[u(\cdot,t)]$ is well-defined and bounded from below for all $t>0$.
A direct calculation shows that
$$
\frac{d}{dt} E[u(\cdot,t)] = - \int_{\Omega} \frac{u^2_t}{\sqrt{1+|Du|^2}} dx \leq 0.
$$
Using this fact one can show in a standard way
that
$$
\sup\limits_{t\geq 1}\|u(\cdot,t)\|_{H^2(\Omega)}\leq C,\qquad
\lim\limits_{t\to \infty} \|u_t(\cdot,t)\|_{L^2(\Omega)}=0,
$$
for some $C>0$. Thus, $u$ has $\omega$-limits in $H^1(\Omega)$.
Suppose that $V(x)$ is one of them, then
$$
\|u(\cdot,t_n)-V(x)\|_{H^1(\Omega)} \to 0,\quad n\to \infty
$$
for some time sequence $\{t_n\}$. Taking $t=t_n$ in the
following equality
$$
\int_\Omega \frac{u_t}{\sqrt{1+|Du|^2}} \rho(x) dx
= \int_\Omega  \left[ -\frac{D\rho \cdot Du}{\sqrt{1+|Du|^2}} + A\rho \right] dx,\qquad \rho\in H^1_0(\Omega),
$$
and then sending $n\to \infty$ we have
$$
0 = \int_\Omega \left[ -\frac{D\rho \cdot DV}{\sqrt{1+|DV|^2}} + A\rho \right] dx .
$$
This implies that $V\in H^1(\Omega)$ is weak stationary solution. Finally, by the regularity theory of elliptic equations, we see that $V$ is actually a classical stationary solution. This proves the quasi-convergence result.

We now show the uniqueness (up to  a shift) for the stationary solution.
By contradiction, we assume that both $V_1(x)$ and $V_2(x)$ are stationary solutions to \eqref{main-equ1}.
Since the equation as well as the boundary condition in \eqref{main-equ1} is invariant in vertical spatial shift,
 $V_1(x)+h$, for any $h\in \R$, is also a stationary solution. Now we take a suitable $h$ such that
$$
V_2(x) \leq V_1(x)+h,\quad x\in \overline{\Omega},
$$
and the \lq\lq equality" holds at some point $x_0\in \overline{\Omega}$. Then the maximum principle implies that $V_2(x)\equiv V_1(x)+h$. This proves the uniqueness.
\end{proof}

For convenience, in the rest of the paper, we denote by $V(x)$ the unique stationary solution to \eqref{main-equ1} satisfying $V(0)=0$.

Now we can prove the main result for bounded solutions.

\begin{Thm}\label{thm:I=0}
Assume the hypotheses in Theorem \ref{thm:est} hold. Then the statements {\bf (A1)}, {\bf (A2)} and {\bf (A3)} are equivalent, where

\noindent
{\bf (A3)} any solution $u$ to \eqref{main-equ1} converges as $t\to \infty$ to a stationary solution.

Moreover, in any of the above cases, $I=0$.
\end{Thm}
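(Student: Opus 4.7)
The plan is to establish the cycle \textbf{(A1)}$\Rightarrow$\textbf{(A2)}$\Rightarrow$\textbf{(A3)}$\Rightarrow$\textbf{(A1)} and, separately, deduce $I=0$ from the existence of a stationary solution. Two links are essentially free: \textbf{(A3)}$\Rightarrow$\textbf{(A1)} because any convergent solution is bounded, and \textbf{(A1)}$\Rightarrow$\textbf{(A2)} is exactly Lemma \ref{lem:ss}.

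The cornerstone of the argument is an integral identity derived from the structural form of \eqref{main-equ1}. Rewriting the equation as $u_t/\sqrt{1+|Du|^2}=\mathrm{div}(Du/\sqrt{1+|Du|^2})+A$, integrating over $\Omega$, and invoking the divergence theorem together with the boundary condition $\partial u/\partial\gamma=-\cos\theta\,\sqrt{1+|Du|^2}$ (noting that the outer normal is $-\gamma$) gives
\[
\int_\Omega\frac{u_t(\cdot,t)}{\sqrt{1+|Du(\cdot,t)|^2}}\,dx
\;=\;\int_{\partial\Omega}\cos\theta\,d\sigma+A|\Omega|\;=\;I
\qquad\text{for every }t.
\]
Evaluating this at a stationary solution yields $I=0$, which handles the ``moreover'' clause. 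For \textbf{(A2)}$\Rightarrow$\textbf{(A1)}, pick $M$ so that $V-M\leq u_0\leq V+M$ on $\overline\Omega$; since \eqref{main-equ1} is invariant under vertical translation, $V\pm M$ are stationary solutions, and the parabolic comparison principle for the oblique boundary value problem (cf.\ Lieberman's book) yields $V-M\leq u(\cdot,t)\leq V+M$ uniformly in $t$.

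The substantive work is \textbf{(A1)}$\Rightarrow$\textbf{(A3)}. Let $V$ denote the stationary solution with $V(0)=0$. Using the uniform gradient bound of Theorem \ref{thm:est} and parabolic Schauder theory, I would first upgrade the bound on $u$ to uniform-in-$t$ $C^{2+\alpha}$ bounds on $\overline\Omega$, so that $\{u(\cdot,t)\}_{t\geq 0}$ is precompact in $C^1(\overline\Omega)$; the quasi-convergence argument inside the proof of Lemma \ref{lem:ss} then forces every $\omega$-limit to be of the form $V+h$ for some $h\in\R$. To pinch the $\omega$-limit to a singleton, introduce
\[
\phi(t):=\max_{\overline\Omega}\bigl(u(\cdot,t)-V\bigr),\qquad
\psi(t):=\min_{\overline\Omega}\bigl(u(\cdot,t)-V\bigr).
\]
Since $V+\phi(t_0)$ is a stationary solution that dominates $u$ at time $t_0$, comparison gives $u(\cdot,s)\leq V+\phi(t_0)$ for all $s\geq t_0$, so $\phi$ is non-increasing; symmetrically $\psi$ is non-decreasing. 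Both are bounded and hence converge to limits $\phi^*\geq\psi^*$. On the other hand, along any subsequence with $u(\cdot,t_n)\to V+h$ in $C^0(\overline\Omega)$, both $\phi(t_n)$ and $\psi(t_n)$ converge to $h$, which forces $\phi^*=\psi^*$. Consequently the $\omega$-limit set is the singleton $\{V+\phi^*\}$, and $u(\cdot,t)\to V+\phi^*$ uniformly.

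The main obstacle is the pinching step, which rests on the strong parabolic comparison principle for the full quasilinear equation with the prescribed contact-angle boundary condition. This is classical for oblique derivative problems when the data are smooth enough ($C^3$ domain and $C^2$ angle here), but it has to be invoked carefully because shifts of $V$ are being used as barriers and the boundary operator depends nonlinearly on $Du$; without the uniform-in-$t$ $C^{2+\alpha}$ bound supplied by Theorem \ref{thm:est}, the whole pinching mechanism would break down.
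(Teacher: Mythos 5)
Your proposal is correct and follows essentially the same route as the paper: the implications \textbf{(A1)}$\Rightarrow$\textbf{(A2)}, \textbf{(A2)}$\Rightarrow$\textbf{(A1)}, and the derivation of $I=0$ by integrating the (stationary) equation and using the divergence theorem with the contact-angle boundary condition all coincide with the paper's argument. The only cosmetic difference is in \textbf{(A1)}$\Rightarrow$\textbf{(A3)}: you package the uniqueness of the $\omega$-limit as a direct monotone-pinching argument (the quantities $\phi(t)=\max(u-V)$ non-increasing and $\psi(t)=\min(u-V)$ non-decreasing converge to a common value), whereas the paper runs a two-point contradiction argument (if $V+h_1$ and $V+h_2$ with $h_1<h_2$ were both $\omega$-limits, the barrier $V+(h_1+h_2)/2$ would trap the solution below $V+h_2$); both rest on the identical ingredient, namely comparison against vertically shifted copies of $V$ plus $C^0$-precompactness of the orbit supplied by the uniform gradient bound and Schauder estimates.
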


\begin{proof}
{\bf (A1)} $\Rightarrow$ {\bf (A2)}. This is proved in the previous lemma.

{\bf (A2)} $\Rightarrow$ {\bf (A1)}. For any initial function $u_0$ we take a large $h>0$ such that
$$
V(x)-h \leq u_0(x) \leq V(x)+h,\quad x\in \overline{\Omega}.
$$
By comparison, $u(x,t;u_0)$, the solution to \eqref{main-equ1} with initial data $u_0$, remains bounded between $V+h$ and $V-h$. This proves (A1).

{\bf (A1)} $\Rightarrow$ {\bf (A3)}. Let $u$ be a bounded solution to \eqref{main-equ1}.
From the proof of the previous lemma we see that any $\omega$-limit of $u(\cdot,t)$ is $V(\cdot)+h$
 for some $h\in \R$. To prove {\bf (A3)} we only need to show that such $h$ is unique.
  By contradiction, we assume both $V(x)+h_1$ and $V(x)+h_2$ with $h_1<h_2$ are $\omega$-limits of $u(\cdot,t)$. Then there exists sufficiently large $T$ such that
$$
\|u(\cdot,T) - V(\cdot)-h_1\|_{C(\overline{\Omega})} < \frac{h_2 - h_1}{2}.
$$
Hence, $u(x,T)$ is smaller than the stationary solution $V(x)+ \frac{h_1+h_2}{2}$, so is $u(x,t)$ for all $t>T$ by comparison.
This, however, contradicts the assumption that $V(x)+h_2$ is also an
$\omega$-limit of $u$.

{\bf (A3)} $\Rightarrow$ {\bf (A1)}. This is obvious.

Finally, we prove $I=0$. Denote by $H(V)$ and ${\bf n}$ the mean curvature and the upward unit normal vector of the graph of $V(x)$, respectively. Then
$$
{\bf n} = \frac{(-DV, 1)}{\sqrt{1+|DV|^2}} \quad
\mbox{and}\quad
H(V) = - {\rm div}_{(x,y)} {\bf n} = {\rm div}_x  \frac{DV}{\sqrt{1+|DV|^2}}.
$$
Integrating the equation of $V$: $0 = H(V) +A $ over $\overline{\Omega}$ and using the boundary condition in \eqref{main-equ1} we have
$$
0= \int_{\Omega}  {\rm div}_x  \frac{DV}{\sqrt{1+|DV|^2}} dx +A|\Omega| = I:= \int_{\partial \Omega} \cos\theta(x) d\sigma +A|\Omega|.
$$
This proves the theorem.
\end{proof}

\subsection{Unbounded solutions converge to translating solutions}

In this subsection we consider the asymptotic behavior for unbounded solutions.
Since the discussion for solutions unbounded from below is similar as that for solutions unbounded
from above. We only study the latter case.

\begin{Lem}\label{lem:to-TW}
Assume the hypotheses in Theorem \ref{thm:est} hold. If the problem \eqref{main-equ1} has a classical global solution $u$ which is unbounded from above,
then \eqref{main-equ1} has a unique (up to a spatial shift) translating solution moving upward.
\end{Lem}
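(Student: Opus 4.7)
The plan is in two parts: first, construct a translating solution as a large-time limit of the unbounded solution $u$; second, prove uniqueness up to vertical shift by a comparison argument.

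For existence, Theorem \ref{thm:est} gives the uniform bounds $|u_t|, |Du| \le C$ on $\overline\Omega \times [0, \infty)$. Fix a sequence $t_n \to \infty$ and set $u_n(x, t) := u(x, t + t_n) - u(0, t_n)$. By vertical translation invariance of \eqref{main-equ1}, each $u_n$ solves the same problem and satisfies $|u_n| \le C\,\mathrm{diam}(\Omega)$ with the same bounds on $Du_n$ and $\partial_t u_n$. The equation is uniformly parabolic on $\{|p| \le C\}$, so interior and boundary Schauder estimates for the oblique derivative problem upgrade these to uniform $C^{2+\alpha, 1+\alpha/2}$ bounds, and Arzel\`a--Ascoli produces a subsequence converging in $C^{2,1}_{\mathrm{loc}}$ to a global-in-time classical solution $U$ of \eqref{main-equ1} on $\overline\Omega \times \R$.

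The heart of the matter is to show $U(x, t) = \Phi(x) + ct$ for some $c > 0$. The key ingredient is the identity
\begin{equation*}
\int_\Omega \frac{u_t(\cdot, t)}{\sqrt{1 + |Du|^2}} \, dx \;=\; A|\Omega| + \int_{\partial\Omega} \cos\theta\, d\sigma \;=\; I, \qquad t \ge 0,
\end{equation*}
obtained by dividing the PDE by $\sqrt{1+|Du|^2}$, rewriting $a^{ij}(Du) D_{ij} u/\sqrt{1+|Du|^2}$ as $\mathrm{div}\bigl(Du/\sqrt{1+|Du|^2}\bigr)$, integrating, and applying the contact angle boundary condition. Coupled with the Lyapunov dissipation $\tfrac{d}{dt} E[u(\cdot, t)] = -\int_\Omega u_t^2/\sqrt{1+|Du|^2}\, dx$ (from the proof of Lemma \ref{lem:ss}) and the Cauchy--Schwarz inequality, one sees that the variance-type defect $\int_\Omega \bigl(u_t - \bar c(t)\bigr)^2/\sqrt{1+|Du|^2}\, dx$, where $\bar c(t) := I / \int_\Omega dx/\sqrt{1+|Du|^2}$, is integrable over $[0, \infty)$, once the modified functional $E[u(\cdot, t)] + (I^2/|\Omega|) t$ is shown to be bounded below (using the uniform gradient bound to compensate for the linear-in-$t$ growth of $u$). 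Passing to the limit along $t_n$, the defect forces $U_t(\cdot, t)$ to be spatially constant at each $t$; writing $U(x, t) = \Phi(x) + g(t)$ and substituting into \eqref{main-equ1} then gives $g'(t) = a^{ij}(D\Phi)D_{ij}\Phi + A\sqrt{1+|D\Phi|^2}$, whose right-hand side is $x$-dependent in general but must equal the $x$-independent left-hand side, so both reduce to a constant $c$ and $g(t) = ct$. The inequality $c > 0$ follows because $u$ is unbounded from above with $|u_t|$ uniformly bounded, which rules out $c < 0$, and $c = 0$ would force $I = 0$ by the identity, placing $u$ between two shifts of the stationary solution $V$ from Theorem \ref{thm:I=0} by comparison and contradicting unboundedness.

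For uniqueness, suppose $\Phi_1(x) + c_1 t$ and $\Phi_2(x) + c_2 t$ are translating solutions. Choose $h > 0$ large enough that $\Phi_1 + h \ge \Phi_2$ on $\overline\Omega$; the comparison principle for \eqref{main-equ1} yields $\Phi_1(x) + h + c_1 t \ge \Phi_2(x) + c_2 t$ for all $t \ge 0$, forcing $c_1 \ge c_2$, and the reverse direction gives $c_1 = c_2$. Decreasing $h$ to its minimal admissible value produces a touching point, and the strong maximum principle (or Hopf boundary lemma if the touching point lies on $\partial\Omega$) applied to the linear elliptic equation with oblique boundary condition satisfied by $\Phi_1 + h - \Phi_2$ forces $\Phi_2 \equiv \Phi_1 + h$. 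The main obstacle is the rigidity step: compactness alone delivers only some global-in-time solution $U$, and upgrading to a translating one requires establishing that the Cauchy--Schwarz variance defect is integrable in $t$, which in turn demands a careful lower bound for the modified Lyapunov functional despite the fact that $E$ itself generally drifts to $-\infty$ for unbounded solutions — a verification that leans essentially on the uniform gradient bound of Theorem \ref{thm:est}.
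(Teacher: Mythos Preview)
Your compactness extraction of an entire limit $U$ and your uniqueness argument for translating solutions (via comparison and the strong maximum principle) are both sound; the latter is in fact more direct than the paper's route. The real problem is the step you yourself flag as ``the main obstacle'': the claim that the modified functional $E[u(\cdot,t)] + (I^2/|\Omega|)\,t$ is bounded below is false in general. Indeed, for the translating solution $\Phi+ct$ itself (which is an admissible unbounded-from-above solution) one computes $E[\Phi+ct] = \mathrm{const} - Ict$, so the modified functional equals $\mathrm{const} + It\bigl(I/|\Omega| - c\bigr)$. But integrating the profile equation gives $c = I\big/\!\int_\Omega (1+|D\Phi|^2)^{-1/2}\,dx > I/|\Omega|$ whenever $D\Phi\not\equiv 0$ (equivalently, whenever $\theta\not\equiv\pi/2$), so the modified functional tends to $-\infty$. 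Hence your route to the integrability of the variance defect collapses, and without it you cannot conclude that $U_t$ is spatially constant.

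The paper avoids any energy argument at this stage. After extracting the entire solution $U$ (and recording the inherited inequality $U(\cdot,t+T)\ge U(\cdot,t)+1$), it proves a \emph{rigidity for entire solutions}: any two entire solutions $U,W$ with the same uniform bounds differ by a constant vertical shift. This is done by tracking the minimal shift $L(t)$ and the residual gap $D(t)$ in $U+L(t)\lessapprox W\lessapprox U+L(t)+D(t)$, showing $D$ is bounded and monotone as $t\to -\infty$, and then passing to a further limit to force $D\equiv 0$. Applying this rigidity to $W(x,t)=U(x,t+\tau)$ gives $U(x,t+\tau)=U(x,t)+l(\tau)$ for every $\tau$, whence $U_t\equiv c$; the earlier inequality then forces $c>0$. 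If you want to salvage your approach, you would need a genuinely different mechanism to show $\mathrm{Var}(t)\to 0$ (e.g., working with $E[u]+\int_0^t I\bar c(s)\,ds$ and separately controlling $u(0,t)-\int_0^t \bar c$), but as written the lower bound you invoke does not hold.
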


\begin{proof}
We divide the proof into four steps.

{\it Step 1. Convergence to $+\infty$}.  By Theorem \ref{thm:est}, $|Du|$ has a uniform-in-time bound $C$.

Since $u$ is assumed to be unbounded from above, there exists $T>0$ such that
$$
u(x,T) \geq u(x,0)+1 =u_0(x) +1,\quad x\in \overline{\Omega}.
$$
By the comparison principle we have
\begin{equation}\label{weak-increase}
u(x,t+T)\geq u(x,t)+1,\quad x\in \overline{\Omega},\ t\geq 0.
\end{equation}
In particular, for any positive integer $k$, we have
$$
u(x, t+kT) \geq u(x,t)+k,\quad x\in \overline{\Omega},\ t\geq 0.
$$
Together with the boundedness of $u_t$ in Theorem \ref{thm:est} we conclude that $u$ actually tends to $+\infty$ as $t\to \infty$.

{\it Step 2. Construction of an entire solution}. Set
\begin{equation}\label{def-uk}
u_k(x,t) := u(x,t+kT)-u(0,kT),\quad x\in \overline\Omega,\ t>-kT.
\end{equation}
Then we have
\begin{equation}\label{uk-est}
|u_{kt}(x,t)|,\ |Du_k(x,t)| \leq C,\quad x\in \overline\Omega,\ t>-kT,
\end{equation}
by the a priori bounds in Theorem \ref{thm:est}. Thus, for any $T_1 >0$
and any $\alpha \in (0,1)$, when $k\gg 1$, by the parabolic theory we have
$$
\|u_k \|_{C^{2+\alpha, 1+\alpha/2}(\overline\Omega\times [-T_1, T_1])} \leq C_1
$$
for some positive $C_1$ depending on $T_1$ but independent of $k$. Then, there is a subsequence
$\{u_{k_j}\}$ of $\{u_k\}$ and a function $U_1 (x,t)\in C^{2+\alpha, 1+\alpha/2}(\overline\Omega\times [-T_1, T_1])$ such
that $u_{k_j}\to U_1$ as $j\to \infty$ in $C^{2, 1}(\overline\Omega\times [-T_1, T_1])$ norm.
Using the Cantor's diagonal argument, we can find a subsequence of $\{u_{k_j}\}$, denoted it again by $\{u_{k_j}\}$, and
 a function $U\in C^{2+\alpha, 1+\alpha/2} (\overline\Omega\times \R)$ such that
$$
u_{k_j}(x,t)\to U(x,t) \mbox{ as } j\to \infty,\quad \mbox{in the } C^{2, 1}_{loc} (\overline\Omega \times \R) \mbox{ topology}.
$$
Clearly, $U$ is an entire solution to \eqref{main-equ1}, that is, a solution defined for all $t\in \R$.
Moreover, by \eqref{uk-est} and \eqref{weak-increase} we have
\begin{equation}\label{U-est}
|U_t (x,t)|,\ |D U (x,t)| \leq C \mbox{ and } U(x,t+T) \geq U(x,t) +1,\quad x\in \overline\Omega,\ t\in \R.
\end{equation}

{\it Step 3. Uniqueness (up to  a shift) of the entire solution}. We show that the entire solution $U$ is unique
up to  a shift in the sense that, for any entire solution $W(x,t)$ to \eqref{main-equ1} satisfying similar
estimates as in \eqref{U-est}, there exists a unique $l\in \R$ such that
\begin{equation}\label{shift}
W(x,t) \equiv U(x,t)+l.
\end{equation}

We apply a similar approach as that in \cite{MNL}.
For simplicity, we use the notation $w_1 \lessapprox w_2$ if $w_1(x)\leq w_2(x)$ and the \lq\lq equality" holds at some points in $\overline\Omega$.
Define
$$
L(t) := \max\{l \in \R\mid U(x,t)+l \leq W(x,t),\ x\in \overline\Omega\}
$$
and
$$
D (t) := \min\{\bar{d} \in \R\mid W(x,t) \leq U(x,t)+ L(t)+\tilde{d},\ x\in \overline\Omega\}.
$$
Then,
\begin{equation}\label{U<=W<=U}
U(x,t)+L(t) \lessapprox W(x,t) \lessapprox U(x,t)+L(t)+D(t).
\end{equation}
Moreover, we have the following properties:

\noindent
{\bf Claims}. (1) $D(t)\geq 0$ for all $t\in \R$;

(2) if $D(t_0) = 0$ for some $t_0\in \R$, then $L(t)=L(t_0)$ and $D(t)=0$ for all $t>t_0$;

(3) if $D(t_0)>0$ for some $t_0\in \R$, then for all $t<t_0$, $L(t)$ is strictly increasing,
$L(t)+D(t)$ and $D(t)$ are strictly decreasing. Both $D$ and $L$ are bounded in $t<t_0$.

We only prove the boundedness for $L$ and $D$ in Claim (3) since other conclusions follow from the maximum principle easily.
Since $|DU|, |DW|\leq C$ by our assumption, we see that the oscillations of $U$ and $W$ are bounded by
$C\cdot \phi(\Omega)$, where $\phi(\Omega)$ denotes the diameter of $\Omega$. Hence, by \eqref{U<=W<=U} we have
$$
D(t)\leq 3 C\cdot \phi(\Omega),\quad t\in \R.
$$
Since $L(t)$ is strictly increasing and $D(t)+L(t)$ is strictly decreasing in $t<t_0$ we have
$$
L(t_0) > L(t) > [D(t_0)+L(t_0)] -D(t) \geq [D(t_0)+L(t_0)] - 3 C\cdot \phi(\Omega),\quad t<t_0.
$$
This proves the boundedness for $L$ and $D$. Hence, in case (3) holds, there exist $\mathcal{D}>0,\ \mathcal{L}\in \R$ such that
\begin{equation}\label{limits-of-DH}
D(t)\to \mathcal{D},\quad L(t)\to \mathcal{L} \mbox{\ \ as  } t\to -\infty.
\end{equation}

Clearly, in order to prove \eqref{shift} we only need to show that $D(t)\equiv 0$. Assume by contradiction that $D(t_0)>0$, then Claim (3) holds.
 For all $x\in \overline\Omega,\ t\in \R$ and $k\in \mathbb{N}$, set
$$
U_k (x,t):= U(x, t-k)-U(0,-k) \mbox{ and } W_k (x,t) := W(x,t-k)- U(0,-k).
$$
Since $U_k (0,0)=0$ and $W_k (0,0)\in [L(-k), L(-k)+D(-k)]$ are bounded, we have similar $C^1$ bounds for
$U_k$ and $W_k$ as above. Hence, we can take limit as $k\to \infty$ (if necessary, we take a subsequence) to obtain
$$
U_k \to \mathcal{U} \mbox{ and } W_k \to \mathcal{W}\ \  \mbox{ as }k\to \infty,\quad \mbox{ in the topology of } C^{2,1}_{loc} (\overline\Omega \times \R),
$$
for some entire solutions $\mathcal{U}$ and $\mathcal{W}$. By \eqref{U<=W<=U} and \eqref{limits-of-DH} we then have
$$
\mathcal{U}(x,t) +\mathcal{L} \lessapprox \mathcal{W} (x,t) \lessapprox \mathcal{U}(x,t) +\mathcal{L}+\mathcal{D}.
$$
This, however, contradicts Claim (3) for the entire solutions $\mathcal{U}$ and $\mathcal{W}$ instead of the entire solutions $U$ and $W$.  Therefore, $D(t)\equiv 0$ holds, and so $L(t)\equiv l\in \R$. This proves \eqref{shift}.

In what follows, we use $U(x,t)$ to denote the unique entire solution satisfying the normalized condition $U(0,0)=0$. (For, otherwise, we just use $U(x,t)-U(0,0)$ to replace the original $U$).
By the definition we see that $u_k$ also satisfies the normalized condition $u_k(0,0)=0$.
Hence, due to the uniqueness of $U$, the whole sequence $\{u_k\}$ converges to $U$:
$$
u_k(x,t) = u(x,t+kT)-u(0,kT)\to U(x,t),\quad \mbox{as} \ k\to \infty.
$$
Since $T$ can be any large number by Step 1, we actually have
\begin{equation}
 u(x,t+s)-u(0,s)\to U(x,t),\quad \mbox{as} \ s\to \infty,
\end{equation}
that is, $u(x,t)$ converges to the unique entire solution in a moving frame.

{\it Step 4. The existence and uniqueness of the translating solution}. Let $U(x,t)$ be the unique  entire solution obtained above.
 Then, for any $\tau \in \R$, $U(x,t+\tau)$ is also an entire solution to \eqref{main-equ1}.
 We can regard this solution as $W$ in Step 3 to conclude that, there exists a unique $l=l(\tau )$ such that $U(x,t+\tau)\equiv U(x,t)+l(\tau)$. Therefore,
$$
\lim\limits_{\tau \to 0} \frac{l(\tau )}{\tau} = \lim\limits_{\tau\to 0} \frac{U(x,t+\tau)-U(x,t)}{\tau} = c:= U_t(x,t).
$$
Hence $U(x,t)$ is nothing but a translating solution with the form $\Phi(x) + ct$ for some function $\Phi(x)$ ($\Phi$ satisfies $\Phi(0)=0$ by the normalized condition).
The sign $c>0$ can be determined directly by the second inequality in \eqref{U-est}.
 The uniqueness (up to a spatial shift) of this translating solution follows from the uniqueness of the entire solution in Step 3.
\end{proof}

Now we prove some equivalent statements for solutions unbounded from above.

\begin{Thm}\label{thm:unbounded-from-above}
Assume the hypotheses in Theorem \ref{thm:est} hold. Then the statements {\bf (B1)}, {\bf (B2)} and {\bf (B3)} are equivalent, where

\noindent
{\bf (B3)} any solution $u$ to \eqref{main-equ1} converges as $t\to \infty$ in a moving frame to an upward moving translating solution.

Moreover, in any of the above cases, $I>0$.
\end{Thm}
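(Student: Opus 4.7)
The plan is to close the loop of implications \textbf{(B1)}$\Rightarrow$\textbf{(B2)}$\Rightarrow$\textbf{(B1)} and \textbf{(B1)}$\Rightarrow$\textbf{(B3)}$\Rightarrow$\textbf{(B1)}, and then read off $I>0$ by integrating the equation satisfied by the translating solution. The implication \textbf{(B1)}$\Rightarrow$\textbf{(B2)} is already contained in Lemma~\ref{lem:to-TW}. The converse \textbf{(B2)}$\Rightarrow$\textbf{(B1)} is immediate, because $\Phi(x)+ct$ is itself a global classical solution of \eqref{main-equ1} (with initial data $\Phi$), and it is unbounded from above since $c>0$. Likewise \textbf{(B3)}$\Rightarrow$\textbf{(B1)} is trivial, since convergence of $u$ in a moving frame to a translating solution with positive speed forces $u(x,t)\to+\infty$.

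The substantive step is \textbf{(B1)}$\Rightarrow$\textbf{(B3)}. I would take an arbitrary classical solution $u$ to \eqref{main-equ1} with smooth initial data $u_0$, and use the translating solution $\Phi(x)+ct$ ($c>0$) provided by \textbf{(B2)} as a comparison profile. Since $\Phi-u_0\in C(\overline{\Omega})$, I can choose constants $h_1,h_2>0$ with $\Phi(x)-h_1\le u_0(x)\le\Phi(x)+h_2$ on $\overline{\Omega}$. Vertical shifts preserve both the equation and the contact-angle condition in \eqref{main-equ1}, so $\Phi(x)+ct\pm h_i$ are again classical solutions, and the comparison principle for the capillary-type boundary-value problem yields
\begin{equation*}
\Phi(x)+ct-h_1\le u(x,t)\le \Phi(x)+ct+h_2,\quad x\in\overline{\Omega},\ t\ge 0.
\end{equation*}
Because $c>0$, $u$ is then unbounded from above, so Steps~2--4 of Lemma~\ref{lem:to-TW} apply to $u$ itself and produce $u(x,t+s)-u(0,s)\to\Phi(x)+ct$ in $C^{2,1}_{\rm loc}(\overline{\Omega}\times\R)$ as $s\to\infty$, which is exactly \textbf{(B3)}.

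To extract $I>0$, I substitute $u=\Phi(x)+ct$ into \eqref{main-equ1} and divide by $\sqrt{1+|D\Phi|^2}$ to obtain
\begin{equation*}
\frac{c}{\sqrt{1+|D\Phi|^2}}=\mathrm{div}\frac{D\Phi}{\sqrt{1+|D\Phi|^2}}+A\quad\text{in }\Omega,
\end{equation*}
together with $D\Phi\cdot\gamma=-\cos\theta(x)\sqrt{1+|D\Phi|^2}$ on $\partial\Omega$. Integrating over $\Omega$ and applying the divergence theorem with the inner normal $\gamma$, exactly as at the end of the proof of Theorem~\ref{thm:I=0}, gives
\begin{equation*}
c\int_\Omega\frac{dx}{\sqrt{1+|D\Phi|^2}}=\int_{\partial\Omega}\cos\theta(x)\,d\sigma+A|\Omega|=I,
\end{equation*}
and the left-hand side is strictly positive because $c>0$. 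Essentially all of the analytic work (construction of the entire solution, its uniqueness up to vertical shift, and its identification as a translating solution) has been absorbed into Lemma~\ref{lem:to-TW}; the only potentially delicate point in the present theorem is the invocation of the comparison principle with the oblique, capillary-type boundary condition, which I would use as a standard black box for quasilinear parabolic problems of this form.
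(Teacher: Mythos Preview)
Your proof is correct and follows essentially the same route as the paper. The paper organizes the equivalence as the cycle \textbf{(B1)}$\Rightarrow$\textbf{(B2)}$\Rightarrow$\textbf{(B3)}$\Rightarrow$\textbf{(B1)} rather than your two pairs of equivalences, and for \textbf{(B2)}$\Rightarrow$\textbf{(B3)} it uses only a one-sided lower barrier $u_0\ge\Phi-h$ (which already suffices to force $u$ unbounded from above) instead of your two-sided sandwich; but the core argument---comparison with a shifted translating solution to put the arbitrary solution into the unbounded-from-above regime, then invoke Lemma~\ref{lem:to-TW}, then integrate the divergence-form equation for $\Phi$---is identical.
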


\begin{proof}
{\bf (B1)} $\Rightarrow$ {\bf (B2)}. This is proved in the previous lemma.

{\bf (B2)} $\Rightarrow$ {\bf (B3)}. Let $\Phi(x) +ct$ with $\Phi(0)=0,\ c>0$ be a translating solution. For any initial function $u_0$ we take a large $h>0$ such that
$$
u_0(x) \geq \Phi(x) -h ,\quad x\in \Omega.
$$
By comparison we know that $u(x,t;u_0) \geq \Phi(x)+ct -h$, and so $u(x,t;u_0)$ is a solution unbounded from above. Using Lemma \ref{lem:to-TW} and its proof we see that $u$ converges in a  moving frame to the translating solution, that is,
$$
u(x,t+s;u_0)- u(0,s;u_0) \to \Phi(x) + ct,\quad \mbox{as }\ s\to \infty.
$$

{\bf (B3)} $\Rightarrow$ {\bf (B1)}. This is obvious.

Finally, we prove $I>0$. The equation for $\Phi(x) +ct$ is
$$
c= a^{ij}(D\Phi) D_{ij} \Phi +A\sqrt{1+|D\Phi|^2},
$$
or, equivalently,
$$
\frac{c}{\sqrt{1+|D\Phi|^2}} = {\rm div}_x  \frac{D\Phi}{\sqrt{1+|D\Phi|^2}} +A.
$$
Integrating this equation over $\Omega$ and using the fact $c>0$ and the boundary condition in \eqref{main-equ1} we have
$$
0 < \int_{\Omega}  {\rm div}_x  \frac{D\Phi}{\sqrt{1+|D\Phi|^2}} dx +A|\Omega| = \int_{\partial \Omega} \cos\theta(x) d\sigma +A|\Omega|=I.
$$
This proves the theorem.
\end{proof}

\subsection{Proof of Theorem \ref{thm:asy}}

Now we can complete the proof of Theorem \ref{thm:asy}.
Note that the statements {\bf (A1)}, {\bf (B1)} and
{\bf (C1)} give all the possible cases and they are independent to each other.
When {\bf (B1)} holds, it follows from Theorem \ref{thm:unbounded-from-above} that $I>0$.
When {\bf (C1)} holds we have $I<0$ in a similar way. Therefore, when $I=0$, the only possible
case is {\bf (A1)}, and so the conclusions in Theorem \ref{thm:asy} (i) follow from Theorem
\ref{thm:I=0}. Similarly, when $I>0$, only the case {\bf (B1)} is possible and so
Theorem \ref{thm:asy} (ii) follows from Theorem \ref{thm:unbounded-from-above}. This completes the proof of Theorem \ref{thm:asy}. \qed

\section{A priori Bounds}
Let  $u\in C^{3,2}(\overline{\Omega}\times [0,T])$ be a classical solution to \eqref{main-equ1} in the time interval $[0,T]$ for some $T\in (0,\infty]$.
In this section we will derive some a priori bounds for $u_t$ and $Du$. The bounds are uniform ones, that is, they are independent of $t$ and $T$.
For simplicity, we use the following notations:
$$
 u_i := D_i u, \quad  u_{ij}:= D_{ij} u,\quad v :=\sqrt{1+|Du|^2}.
$$
In addition, we will follow the Einstein's sum convention: all the repeated indices denote summation from $1$ to $n$.

On the contact angles, we assume $\theta(x)\in C^2(\overline{\Omega})$ with $0<\theta(x) <\pi$, and so
\begin{equation}
S_0  := \max\limits_{\overline{\Omega}} |\cos\theta(x)| <1.
\end{equation}

In the first subsection we derive a bound for $u_t$, and in the rest parts we derive the bounds for $Du$,
including the case $\Omega\subset \R^2$ being convex in Subsection 3.2,
the case $\Omega\subset \R^n$ being convex in Subsection 3.3, and the case $\Omega\subset \R^n$ being a general domain in Subsection 3.4.
Since non-convex results are rarely seen, we give a more detailed proof of this situation.
In the paper, for convenience, we will use subscript indices to denote derivatives for functions and
follow the summation convention.

\subsection{A priori bound for $u_t$}

\begin{Lem}\label{Lem1.1}
Let $u\in C^{3,2}(\overline{\Omega}\times [0,T])$ (for some $T>0$) be a solution to \eqref{main-equ1}. Then
\begin{align*}
\max_{\overline\Omega\times[0,T]}|u_t|=\max_{\overline\Omega}|u_t(\cdot,0)|.
\end{align*}
\end{Lem}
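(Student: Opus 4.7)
The natural strategy is to show that $w := u_t$ satisfies a linear parabolic equation with no zero-order term and a compatible homogeneous oblique boundary condition, so that $|w|$ cannot have a new extremum after $t=0$.

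First I would differentiate the PDE in \eqref{main-equ1} with respect to $t$. Since the equation $u_t = a^{ij}(Du)D_{ij}u + A\sqrt{1+|Du|^2}$ is autonomous in $u$, differentiating gives
\begin{equation*}
w_t = a^{ij}(Du) D_{ij} w + b^k(x,t) D_k w
\end{equation*}
for some coefficients $b^k$ (involving $\partial a^{ij}/\partial p_k$, $D_{ij}u$, and $A D_k u/v$). Crucially, there is no zero-order term in $w$, so the standard parabolic maximum principle applies: any extremum of $w$ on $\overline\Omega \times [0,T]$ is attained on the parabolic boundary.

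Next, I would differentiate the boundary condition $\partial_\gamma u = -\cos\theta(x)\cdot v$ in $t$, obtaining
\begin{equation*}
\partial_\gamma w = -\cos\theta(x)\,\frac{D_k u \cdot D_k w}{v} \quad \text{on } \partial\Omega \times [0,T].
\end{equation*}
Suppose, toward a contradiction, that the maximum of $w$ on $\overline\Omega \times [0,T]$ were attained at some $(x_0, t_0)$ with $t_0 > 0$ and $x_0 \in \partial\Omega$. At such a point all tangential derivatives of $w$ vanish, so $D w(x_0, t_0) = (\partial_\gamma w)(x_0,t_0)\,\gamma(x_0)$. Substituting into the boundary relation and then using the identity $\partial_\gamma u = -\cos\theta \cdot v$ from \eqref{main-equ1}, I get
\begin{equation*}
\partial_\gamma w = -\cos\theta \cdot \frac{\partial_\gamma u}{v}\,\partial_\gamma w = \cos^2\theta(x_0)\,\partial_\gamma w,
\end{equation*}
that is, $\sin^2\theta(x_0)\,\partial_\gamma w(x_0, t_0) = 0$. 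Since $0 < \theta(x) < \pi$ gives $\sin\theta > 0$, we conclude $\partial_\gamma w(x_0, t_0) = 0$. On the other hand, Hopf's lemma for the parabolic equation satisfied by $w$ forces $\partial_\gamma w(x_0, t_0) < 0$ at a boundary maximum (with $\gamma$ the inner normal), a contradiction. Hence the maximum of $w$ occurs at $t=0$, and the same argument applied to $-w$ handles the minimum, yielding the claim.

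The main technical point is the algebraic cancellation at the boundary: the differentiated Neumann-type condition together with the geometric identity $\partial_\gamma u/v = -\cos\theta$ collapses the obliqueness into the factor $\sin^2\theta$, which is strictly positive by hypothesis. Everything else is a routine invocation of the parabolic maximum principle and Hopf's lemma; the assumed smoothness $u \in C^{3,2}$ is precisely what is needed to justify differentiating the equation and boundary condition in $t$.
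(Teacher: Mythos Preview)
Your proposal is correct and follows essentially the same approach as the paper's proof: differentiate the equation in $t$ to obtain a linear parabolic equation for $w=u_t$ with no zero-order term, then at a putative boundary maximum exploit the vanishing of tangential derivatives of $w$ together with the differentiated boundary condition to conclude $\partial_\gamma w=0$, contradicting Hopf's lemma. The paper carries out the same computation in coordinates adapted to the boundary (with $\gamma=e_n$), arriving at the identical identity $u_{nt}=\cos^2\theta\cdot u_{nt}$.
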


\begin{proof} Following the method in \cite[Lemma 2.2]{AW}, it suffices to prove the following fact:
For any fixed $T_0\in (0,T]$, if $u_t$ admits a positive
local maximum at some point $(x_0,t_0)\in\overline\Omega\times[0,T_0]$, that is,
\begin{align*}
u_t(x_0,t_0)= \max_{\overline\Omega\times[0,T_0]}u_t\ge 0,
\end{align*}
then $t_0=0$.

Suppose by contradiction that $t_0>0$. It is easy to calculate that $u_t$ satisfies the equation
\begin{align}
\frac{d}{dt}u_{t}=& a^{ij}(u_t)_{ij}
-\frac{2}{v}a^{ij}v_{i}u_{tj}+\frac{A}{v}u_ju_{tj}.\label{1.1}
\end{align}
This implies that $u_t$ satisfies the parabolic maximum principle, and so $x_0$ can be taken on $\partial\Omega$.
Choose the coordinates in $\mathbb{R}^n$ such that the positive $x_n$-axis is
the interior normal direction to $\partial \Omega $ at $x_0$. Then at the point $(x_0, t_0)$,
\begin{align}\label{1.2}
u_{tk}=&0,\qquad k=1,\ldots,n-1,
\end{align}
and
\begin{align}\label{1.3}
v_t=\frac{u_ku_{kt}}{v}=\frac{u_nu_{nt}}{v}=-\cos\theta\cdot u_{nt}.
\end{align}
Differentiating the  boundary condition in $t$, we have
\begin{align}\label{1.4}
u_{nt}(x_0,t_0)=& -\cos\theta(x_0)\cdot v_t (x_0, t_0)=\cos^2\theta(x_0)\cdot  u_{nt}(x_0,t_0).
\end{align}
Therefore,
\begin{align}\label{1.5}
u_{nt}(x_0,t_0)=&0.
\end{align}
This however, contradicts the Hopf lemma at $(x_0,t_0)$.
\end{proof}

\subsection{Uniform $|Du|$-bounds when $\Omega\subset \R^2$ is convex}

\begin{Thm}\label{Thm1.1}
Assume $\Omega\subset \R^2$ is a strictly convex bounded domain with $C^3$ boundary $\partial \Omega$.
Let $u\in C^{3,2}(\overline\Omega\times[0,T])$ be a solution to the  problem \eqref{main-equ1} in the time interval $[0,T]$ for some $T\in (0,\infty]$.
If the curvature $\kappa(x)$ of $\partial\Omega$ satisfies
$$
\kappa(x) -|A|-\max\limits_{\overline{\Omega}} |D\theta| \ge\delta_1,
$$
for some $\delta_1 >0$, then
$$
\sup\limits_{\Omega\times [0,T]} |Du|\leq C_1(\Omega, S_0 , \|\theta\|_{C^1 (\overline{\Omega})}, \delta_1,u_0).
$$
\end{Thm}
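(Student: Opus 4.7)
I would apply the parabolic maximum principle to $v=\sqrt{1+|Du|^2}$ on $\overline\Omega\times[0,T_0]$ for an arbitrary $T_0\le T$; because the bound obtained will not depend on $T_0$, this produces the uniform-in-time estimate. A direct computation from \eqref{main-equ1} shows that $v$ satisfies a quasilinear parabolic equation of schematic form $v_t=a^{ij}(Du)\,v_{ij}+\mathcal{R}$, where $\mathcal{R}$ collects terms of lower order in $D^2u$; the driving-force contribution is controlled via the $|u_t|$-bound of Lemma~\ref{Lem1.1}. Let $(x_0,t_0)$ be a point where $v$ attains its maximum on $\overline\Omega\times[0,T_0]$. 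The cases $t_0=0$ (reducing the bound to $u_0$) and $t_0>0$ with $x_0\in\Omega$ (interior maximum principle plus ellipticity of $a^{ij}$ and the control on $\mathcal{R}$) are routine. Hence the essential case is $t_0>0$ with $x_0\in\partial\Omega$.

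For the boundary case I would choose an orthonormal frame $\{\tau,\gamma\}$ at $x_0$ with $\gamma$ the inward unit normal to $\partial\Omega$; in $\R^2$ the rotation $\partial_\tau\gamma=-\kappa(x_0)\tau$ brings the curvature into every tangential differentiation on $\partial\Omega$. The boundary condition $u_\gamma=-\cos\theta\cdot v$ immediately yields the key identity
\begin{equation*}
v^2\sin^2\theta(x_0) = 1+u_\tau^2,
\end{equation*}
so $v$ is controlled by $|u_\tau|$, with a definite lower bound since $\sin\theta\ge\sqrt{1-S_0^2}>0$. Differentiating the boundary condition along $\tau$ and using $\partial_\tau\gamma=-\kappa\tau$ produces a relation expressing $u_{\tau\gamma}$ in terms of $\kappa u_\tau$, $(\cos\theta)_\tau v$, and $\cos\theta\cdot v_\tau$; combined with the tangential first-order condition $v_\tau(x_0,t_0)=0$, this pins down $u_{\tau\gamma}$ in terms of $v$, $u_\tau$, and $\theta_\tau$ alone.

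Next I would expand $v\,v_\gamma=u_\tau u_{\tau\gamma}+u_\gamma u_{\gamma\gamma}$, eliminate the uncontrolled second derivative $u_{\gamma\gamma}$ by using the equation $u_t=a^{ij}u_{ij}+Av$ written in the $\{\tau,\gamma\}$ frame (with $a^{\gamma\gamma}=\sin^2\theta\ge 1-S_0^2>0$ from the boundary condition), and use $u_\tau u_{\tau\tau}=-u_\gamma u_{\tau\gamma}$ (which follows from $v_\tau=0$) to dispose of $u_{\tau\tau}$. After grouping, the terms whose coefficients involve $\kappa$, $A$, and $D\theta$ should assemble, up to bounded error depending only on $S_0$, $\|\theta\|_{C^1}$, and $\|u_t\|_\infty$, into a single leading positive factor $\kappa(x_0)-|A|-|D\theta(x_0)|$ multiplying an explicitly positive expression that grows as $v\to\infty$. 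The Hopf lemma applied to the parabolic equation for $v$ forces $v_\gamma(x_0,t_0)\le 0$ at the boundary maximum, so the strict-positivity hypothesis $\kappa-|A|-|D\theta|\ge\delta_1>0$ pins that growing expression, and thence $v$, to a uniform bound depending only on $\Omega$, $S_0$, $\|\theta\|_{C^1}$, $\delta_1$, and $u_0$. The main obstacle is precisely this boundary bookkeeping: one must carry out the $u_{\gamma\gamma}$- and $u_{\tau\tau}$-eliminations in the correct order, keeping track of the rotating-frame $\kappa$-contributions, so that the combination $\kappa-|A|-|D\theta|$ emerges cleanly as the coefficient of the dominant positive term on the right-hand side of the Hopf inequality; once that combination is isolated, the strict inequality $\ge\delta_1>0$ closes the argument.
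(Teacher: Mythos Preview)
Your proposal is correct and follows essentially the same route as the paper: apply the maximum principle to $v$ (the paper uses $|Du|^2$, a cosmetic difference) to reduce to a boundary maximum, then at that point combine the tangential first-order condition $v_\tau=0$, the tangentially differentiated boundary relation, and the PDE (to eliminate $u_{\gamma\gamma}$) so that the normal inequality $v_\gamma\le 0$ yields $(\kappa-|A|-|\theta_\tau|)v^2\le |u_t|\,v+C$, forcing the bound via $\delta_1>0$. One small terminological point: what you invoke as the ``Hopf lemma'' is really just the first-order necessary condition $v_\gamma\le 0$ at a boundary maximum (strict Hopf is not needed), and the interior step requires no control on $u_t$ since the $A$-term enters only as a gradient term $\tfrac{Au_k}{v}v_k$.
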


\begin{proof}
We first show that the maximum of $|Du|^2$ is attained on $\Omega\times\{0\}$ or on $\partial\Omega\times[0,T]$. A direct calculation shows that
\begin{align}\label{Thm1.1-|Du|^2-t}
\begin{split}
(|Du|^2)_t=&2u_ku_{kt}.
\end{split}\end{align}
From the  first equation in \eqref{main-equ1}, we have
\begin{align}\label{utk-1}\begin{split}
2u_ku_{tk}=&2a^{ij}u_{ijk}u_k+2a^{ij}_{p_m}u_{mk}u_{ij}u_k+2Au_kv_k\\
=&a^{ij}(|Du|^2)_{ij}-2a^{ij}u_{ik}u_{jk}+2a^{ij}_{p_m}u_{mk}u_{ij}u_k+\frac{Au_k}{v}(|Du|^2)_k,
\end{split}\end{align}
where
\begin{align}\label{aij-pm}
a^{ij}_{p_m}=&\frac{2u_iu_ju_m}{v^4}-\frac{\delta_{im}u_j+\delta_{jm}u_i}{v^2}.
\end{align}
Substituting \eqref{aij-pm} into \eqref{utk-1} we have
\begin{align}\label{utk-2}\begin{split}
2u_ku_{tk}
=&a^{ij}(|Du|^2)_{ij}-2a^{ij}u_{ik}u_{jk}-\frac{4}{v^2}a^{ij}u_{kj}u_{im}u_{m}u_k+\frac{Au_k}{v}(|Du|^2)_k\\
=&a^{ij}|Du|^2_{ij}-2a^{ij}u_{ik}u_{jk}-\frac{1}{v^2}a^{ij}(|Du|^2)_i(|Du|^2)_j+\frac{Au_k}{v}(|Du|^2)_k.
\end{split}\end{align}
Hence \eqref{Thm1.1-|Du|^2-t} becomes
\begin{align}\label{Lem2.1-v-t-2}\begin{split}
(|Du|^2)_t
=&a^{ij}|Du|^2_{ij}-2a^{ij}u_{ik}u_{jk}-\frac{1}{v^2}a^{ij}(|Du|^2)_i(|Du|^2)_j+\frac{Au_k}{v}(|Du|^2)_k.
\end{split}\end{align}
Since $a^{ij}$ is a semi-positive definite matrix, we have $a^{ij}u_{ik}u_{jk}\ge0$.  It follows that
\begin{align}\label{Lem2.1-v-t-3}\begin{split}
(|Du|^2)_t
\le&a^{ij}|Du|^2_{ij}-\frac{1}{v^2}a^{ij}(|Du|^2)_i(|Du|^2)_j+\frac{Au_k}{v}(|Du|^2)_k.
\end{split}\end{align}
Now we can use the maximum principle to conclude that the maximum of $|Du|^2$ is attained on $\Omega\times\{0\}$ or $\partial\Omega\times[0,T]$.

Next we assume that $|Du|^2$ attains its maximum at the point $(x_0,t_0)\in\partial\Omega\times[0,T]$.
Following the idea of Altschuler and Wu \cite{AW}, denote the unit inner normal vector and the unit
 tangential vector by $\mathcal{N}$ and $\mathcal{T}$, respectively.
On $\partial\Omega$, we define $a^{\mathcal{T}\mathcal{N}}=a^{ij}\mathcal{T}_i\mathcal{N}_j$, $a^{\mathcal{T}\mathcal{T}}=a^{ij}\mathcal{T}_i\mathcal{T}_j$, $a^{\mathcal{N}\mathcal{N}}=a^{ij}\mathcal{N}_i\mathcal{N}_j$.
As did in \cite{AW}, we can give a smooth extension of $\mathcal{N}$ and $\mathcal{T}$ to a thin neighborhood of $\partial\Omega$.
Consider the set of coordinate vectors $\{\frac{\partial}{\partial r},\frac{\partial}{\partial\theta}\}$,
 $\frac{\partial}{\partial r}$ is orthogonal to the level sets of the distance function $d(x)=\mathrm{dist}(x,\partial\Omega)$
 such that $|\frac{\partial}{\partial r}|^2=1$ and $\frac{\partial}{\partial r}\mid_{\partial \Omega}=\mathcal{N}$, $\frac{\partial}{\partial\theta}$
 is taken such that $\langle\frac{\partial}{\partial r}, \frac{\partial}{\partial\theta}\rangle=0$ and $\frac{\partial}{\partial\theta}\mid_{\partial\Omega}=\mathcal{T}$.
 Let $\phi$ be a function such that $|\phi^{-1}\frac{\partial}{\partial\theta}|^2=1$. Now we can extend $\mathcal{N}$ and $\mathcal{T}$ by $\{\frac{\partial}{\partial r}, \phi^{-1}\frac{\partial}{\partial\theta}\}$. By Lemma 2.1 in \cite{AW}, we have
\begin{align}\label{NT}\begin{split}
\nabla_\mathcal{T}\mathcal{T}=&\kappa \mathcal{N},\,\,\nabla_\mathcal{T}\mathcal{N}=-\kappa \mathcal{T},\,\,\nabla_\mathcal{N}\mathcal{T}=0,\,\,\nabla_\mathcal{N}\mathcal{N}=0,\\
u_{\mathcal{T}\mathcal{N}}=&u_{\mathcal{N}\mathcal{T}}+\kappa u_{\mathcal{T}}.
\end{split}\end{align}
We shall deal with the second order derivatives $u_{\mathcal{N}\mathcal{N}}$ and $u_{\mathcal{T}\mathcal{N}}$.
For the term $u_{\mathcal{T}\mathcal{N}}$, we will use the boundary condition to obtain the following equalities.
\begin{align}
u_\mathcal{N}=&-\cos\theta \cdot v,\label{equalities-1.1}\\
u^2_\mathcal{N}=&\cos^2\theta \cdot v^2,\label{equalities-1.2}\\
u_\mathcal{T}^2=&\sin^2\theta \cdot v^2-1.\label{equalities-1.3}
\end{align}
Assume, without loss of generality, that $|u_\mathcal{T} (x_0,t_0)|\ge1$. Otherwise, by the boundary condition we obtain the bound
\begin{align}\label{Lem2.1-v-t-3}\begin{split}
|Du|^2 (x_0,t_0) <\frac{2}{1-S^2_0}-1.
\end{split}\end{align}
Hence, at $(x_0,t_0)$,
\begin{equation}\label{|Du|^2T}
(|Du|^2)_\mathcal{T}= 0=v_\mathcal{T},\quad u_\mathcal{N}u_{\mathcal{N}\mathcal{T}}+u_\mathcal{T}u_{\mathcal{T}\mathcal{T}}=0,
\end{equation}
and
\begin{equation}\label{|Du|^2N-1}
(|Du|^2)_\mathcal{N}\le0,\quad  u_\mathcal{N}u_{\mathcal{N}\mathcal{N}}+u_\mathcal{T}u_{\mathcal{T}\mathcal{N}}\le0.
\end{equation}
Differentiating the equalities \eqref{equalities-1.1}-\eqref{equalities-1.3} in the tangential direction, we have
\begin{align}
u_{\mathcal{N}\mathcal{T}}=&\sin\theta\theta_\mathcal{T}v-\cos\theta v_\mathcal{T},\label{equalities-2.2}\\
u_{\mathcal{T}\mathcal{N}}=&\sin\theta\theta_\mathcal{T}v-\cos\theta v_\mathcal{T}+\kappa u_\mathcal{T},\label{equalities-2.3}\\
u_{\mathcal{T}\mathcal{T}}=&\cos\theta\sin\theta\theta_\mathcal{T}\frac{v^2}{u_\mathcal{T}}+\frac{\sin^2\theta vv_\mathcal{T}}{u_\mathcal{T}}.\label{equalities-2.4}
\end{align}
In particular, at $(x_0,t_0)$, by \eqref{|Du|^2T} we have
\begin{align}
u_{\mathcal{N}\mathcal{T}}=&\sin\theta\theta_\mathcal{T}v,\label{equalities-2.2'}\\
u_{\mathcal{T}\mathcal{N}}=&\sin\theta\theta_\mathcal{T}v+\kappa u_\mathcal{T},\label{equalities-2.3'}\\
u_{\mathcal{T}\mathcal{T}}=&\cos\theta\sin\theta\theta_\mathcal{T}\frac{v^2}{u_\mathcal{T}}.\label{equalities-2.4'}
\end{align}
For the term $u_{\mathcal{N}\mathcal{N}}$, we will rewrite $u_t$ in the equation \eqref{main-equ1} under the $\mathcal{N},\mathcal{T}$ directions.
\begin{align}\label{derivatives}\begin{split}
\partial_i=&\mathcal{T}^i\partial_\mathcal{T}+\mathcal{N}^i\partial_\mathcal{N},\\
\partial_{ij}=&(\mathcal{T}^j\partial_\mathcal{T}+\mathcal{N}^j\partial_\mathcal{N})(\mathcal{T}^i\partial_\mathcal{T}+\mathcal{N}^i\partial_\mathcal{N})\\
=&\mathcal{T}^j\mathcal{T}^i\partial_\mathcal{T}\partial_{\mathcal{T}}
+\mathcal{T}^j\mathcal{N}^i\partial_\mathcal{T}\partial_\mathcal{N}+\mathcal{N}^j\mathcal{T}^i\partial_\mathcal{N}\partial_\mathcal{T}
+\mathcal{N}^j\mathcal{N}^i\partial_\mathcal{N}\partial_\mathcal{N}
+\kappa \mathcal{T}^j\mathcal{N}^i\partial_\mathcal{T}-\kappa \mathcal{T}^j\mathcal{T}^i\partial_\mathcal{N}.
\end{split}\end{align}
Since
\begin{align}\label{aTN}\begin{split}
a^{\mathcal{T}\mathcal{T}}=&a^{ij}\mathcal{T}^i\mathcal{T}^j=\frac{1+u_\mathcal{N}^2}{v^2},\\
a^{\mathcal{T}\mathcal{N}}=&a^{ij}\mathcal{T}^i\mathcal{N}^j=-\frac{u_\mathcal{T}u_\mathcal{N}}{v^2},\\
a^{\mathcal{N}\mathcal{N}}=&a^{ij}\mathcal{N}^i\mathcal{N}^j=\frac{1+u_\mathcal{T}^2}{v^2}=\sin^2\theta,
\end{split}\end{align}
 by \eqref{equalities-2.2'}-\eqref{equalities-2.4'}, we can rewrite $u_t$ at $(x_0,t_0)$ as the following.
\begin{align}\label{ut}\begin{split}
u_t=&a^{ij}u_{ij}+Av\\
=&a^{\mathcal{T}\mathcal{T}}u_{\mathcal{T}\mathcal{T}}+a^{\mathcal{N}\mathcal{T}}u_{\mathcal{N}\mathcal{T}}
+a^{\mathcal{T}\mathcal{N}}u_{\mathcal{T}\mathcal{N}}+a^{\mathcal{N}\mathcal{N}}u_{\mathcal{N}\mathcal{N}}
+\kappa a^{\mathcal{N}\mathcal{T}}u_{\mathcal{T}}-\kappa a^{\mathcal{T}\mathcal{T}}u_\mathcal{N}+Av\\
=&a^{\mathcal{T}\mathcal{T}}u_{\mathcal{T}\mathcal{T}}+2a^{\mathcal{N}\mathcal{T}}u_{\mathcal{N}\mathcal{T}}+a^{\mathcal{N}\mathcal{N}}u_{\mathcal{N}\mathcal{N}}
+2\kappa a^{\mathcal{N}\mathcal{T}}u_{\mathcal{T}}-\kappa a^{\mathcal{T}\mathcal{T}}u_\mathcal{N}+Av\\
=&\sin^2\theta u_{\mathcal{N}\mathcal{N}}+\frac{1+u_\mathcal{N}^2}{v^2}\cos\theta\sin\theta\theta_\mathcal{T}\frac{v^2}{u_\mathcal{T}}
-2\frac{u_\mathcal{T}u_\mathcal{N}}{v^2}\sin\theta\theta_\mathcal{T}v\\&-2\kappa \frac{u_\mathcal{T}u_\mathcal{N}}{v^2}u_{\mathcal{T}}-\kappa \frac{1+u_\mathcal{N}^2}{v^2}u_\mathcal{N}+Av\\
=&\sin^2\theta u_{\mathcal{N}\mathcal{N}}+\frac{2u^2_\mathcal{T}+1+u_\mathcal{N}^2}{u_\mathcal{T}}\cos\theta\sin\theta\theta_\mathcal{T}+\kappa \frac{2u^2_\mathcal{T}+1+u_\mathcal{N}^2}{v}\cos\theta+Av\\
=&\sin^2\theta u_{\mathcal{N}\mathcal{N}}+\frac{u^2_\mathcal{T}+v^2}{u_\mathcal{T}}\cos\theta\sin\theta\theta_\mathcal{T}+\kappa \frac{u^2_\mathcal{T}+v^2}{v}\cos\theta+Av.
\end{split}\end{align}
Hence, we have
\begin{align}\label{uNN}\begin{split}
\sin^2\theta u_{\mathcal{N}\mathcal{N}}=&u_t-Av-\kappa \cos\theta\frac{v^2+u_\mathcal{T}^2}{v}-\frac{u^2_\mathcal{T}+v^2}{u_\mathcal{T}}\cos\theta\sin\theta\theta_\mathcal{T}.
\end{split}\end{align}
 Multiplying $\sin^2\theta$ at both sides of \eqref{|Du|^2N-1}, and using \eqref{equalities-2.3'} and \eqref{uNN}, we have
\begin{align}\label{|Du|^2N-2}\begin{split}
0\ge& u_\mathcal{N}\sin^2\theta u_{\mathcal{N}\mathcal{N}}+u_\mathcal{T}\sin^2\theta u_{\mathcal{T}\mathcal{N}}\\
=&-\cos\theta v \big[u_t-Av
-\kappa \cos\theta\frac{v^2+u_\mathcal{T}^2}{v}-\frac{u^2_\mathcal{T}+v^2}{u_\mathcal{T}}\cos\theta\sin\theta\theta_\mathcal{T}\big]\\
&+u_\mathcal{T}\sin^2\theta \big[\sin\theta\theta_\mathcal{T}v+\kappa u_\mathcal{T}\big]\\
=&-\cos\theta u_tv+A\cos\theta v^2
+\kappa  (\cos^2\theta v^2+u_\mathcal{T}^2)+\sin\theta\theta_\mathcal{T}\frac{v(v^2-1)}{u_\mathcal{T}}\\
=&\kappa |Du|^2+A\cos\theta v^2-\cos\theta u_tv+\frac{v\theta_\mathcal{T}}{\sin\theta u_\mathcal{T}}\big[u^2_\mathcal{T}+\cos^2\theta\big]\\
\ge&(\kappa-|A\cos\theta|-|\theta_\mathcal{T}|)v^2-|u_t|v-\frac{v|\theta_\mathcal{T}|}{\sin\theta }-\kappa.
\end{split}\end{align}
By our assumption $\kappa-|A|-|\theta_\mathcal{T}|\ge\delta_1>0$, we obtain
$$\sup_{\Omega\times [0,T]}|Du|\leq C_1 (\Omega, S_0 ,\|\theta\|_{C^1(\overline{\Omega})},\delta_1,u_0).
$$
This proves the theorem.
\end{proof}

\subsection{Uniform $|Du|$-bounds when $\Omega\subset \R^n$ is convex}

When $\Omega$ is a strictly convex smooth domain in high dimensions,  we firstly state the result as follows.

\begin{Thm}\label{Thm1.2}
Assume $\Omega\subset \mathbb{R}^n\ (n\geq 2)$ is a strictly convex bounded domain with $\partial \Omega \in C^3$.
 Let $u\in C^{3,2}(\overline\Omega\times[0,T])$ be a solution to the problem \eqref{main-equ1}
 in the time interval $[0,T]$ for some $T\in (0,\infty]$.
  If $\theta\in C^2(\overline{\Omega})$ and
\begin{equation}\label{angle perturbation0}
\|\cos\theta\|_{C^2(\overline{\Omega})} \ll 1,
\end{equation}
then
\begin{equation}\nonumber
\sup_{\Omega\times[0,T]} |Du|\leq C_2 (n,\Omega, u_0, \|\cos\theta\|_{C^2(\overline{\Omega})}).
\end{equation}
\end{Thm}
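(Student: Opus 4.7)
The plan is to adapt the two-dimensional strategy of Theorem \ref{Thm1.1} to higher dimensions, using strict convexity of $\Omega$ as the main source of boundary control and exploiting $\|\cos\theta\|_{C^2}\ll 1$ to absorb the perturbation terms generated by the capillary boundary condition. A first observation is that the interior calculation \eqref{Lem2.1-v-t-2} in the proof of Theorem \ref{Thm1.1} is dimension-free: no feature specific to $n=2$ was used, so the parabolic inequality for $|Du|^2$ holds in every dimension and the parabolic maximum principle again reduces the problem to bounding $|Du|$ on $\partial\Omega\times[0,T]$ (the initial datum being controlled by $u_0$).

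At a putative boundary maximum point $(x_0,t_0)\in\partial\Omega\times(0,T]$ I would fix coordinates with $\gamma=e_n$ along the inner normal and $\{e_1,\dots,e_{n-1}\}$ diagonalizing the second fundamental form $\Pi$ of $\partial\Omega$, with principal curvatures $\kappa_\alpha\ge\kappa_0>0$ supplied by strict convexity. Differentiating the boundary condition $u_\gamma=-\cos\theta\cdot v$ along each tangential direction and using $D_{e_\alpha}\gamma=-\kappa_\alpha e_\alpha$ yields the higher-dimensional analogue of \eqref{equalities-2.3'},
$$u_{n\alpha} \;=\; -(\cos\theta)_\alpha\,v \;-\; \cos\theta\,v_\alpha \;+\; \kappa_\alpha u_\alpha,\qquad \alpha=1,\dots,n-1.$$
Combined with the tangential extremality conditions $(|Du|^2)_\alpha(x_0,t_0)=0$ and the Hopf inequality $(|Du|^2)_n(x_0,t_0)\le 0$, this provides the raw input for the boundary computation.

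To close the estimate, I would rewrite the equation $u_t=a^{ij}u_{ij}+Av$ in the frame $(e_1,\dots,e_{n-1},\gamma)$ to solve for $u_{nn}$ in terms of the tangential data (mirroring the derivation of \eqref{ut}--\eqref{uNN}), substitute both $u_n=-\cos\theta\,v$ and the differentiated boundary condition into the Hopf inequality, and apply the boundary identity $|D_\tau u|^2 = \sin^2\theta\cdot v^2 - 1$ that follows from $v^2=1+u_n^2+|D_\tau u|^2$. After grouping terms along the principal frame, the resulting scalar inequality has the schematic form
$$0 \;\ge\; \bigl(\kappa_0 - C\|\cos\theta\|_{C^2(\overline{\Omega})}\bigr)\,|D_\tau u|^2 \;-\; C\bigl(1+|u_t|\bigr)\,v,$$
with $C$ depending only on $n$ and $\Omega$. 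Since $|u_t|$ is uniformly bounded by Lemma \ref{Lem1.1} and $|D_\tau u|^2$ grows like $v^2$ for large $v$ (here $\sin\theta$ is bounded below away from zero, in fact close to $1$ under the smallness hypothesis), the coefficient of $v^2$ stays strictly positive whenever $\|\cos\theta\|_{C^2}\ll 1$, forcing $v(x_0,t_0)\le C_2$.

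The main obstacle is the boundary algebra in dimensions $n\ge 3$: the tangential Hessian $(u_{\alpha\beta})$ and the shape operator are genuine $(n-1)\times(n-1)$ matrices, so the off-diagonal cross-terms produced by substituting the differentiated boundary condition into $(|Du|^2)_n$ do not collapse as they do in the scalar two-dimensional case. Aligning the tangential frame with the principal directions of $\Pi$ reduces most identities to diagonal ones, but residual cross-terms of size $\|\cos\theta\|_{C^1}\cdot|D_\tau u|\cdot v$ must be absorbed into $\kappa_0|D_\tau u|^2$ via Cauchy--Schwarz. This absorption is precisely what the hypothesis $\|\cos\theta\|_{C^2(\overline{\Omega})}\ll 1$ enables; without it the capillary perturbation would dominate the convexity contribution and the argument would break down.
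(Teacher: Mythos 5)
Your interior reduction is fine, and so is the idea that strict convexity should supply a good sign on the boundary; but the boundary step that ``mirrors \eqref{ut}--\eqref{uNN}'' does not carry over to $n\ge3$, and this is precisely where the proposal breaks. In the two--dimensional proof of Theorem~\ref{Thm1.1} all second derivatives at the boundary maximum of $|Du|^2$ are determined: $u_{\mathcal{N}\mathcal{T}}$ from the differentiated boundary condition, $u_{\mathcal{T}\mathcal{T}}$ from the single tangential criticality equation \eqref{|Du|^2T}, and then $u_{\mathcal{N}\mathcal{N}}$ from the PDE \eqref{uNN}. When $n\ge3$, rotate the tangential frame at $x_0$ so that $D_\tau u=u_1e_1$; the $n-1$ criticality relations $(|Du|^2)_\alpha=0$ only control $u_{11}$ and $u_{1\alpha}$, and say nothing about the block $\{u_{\alpha\beta}\}_{\alpha,\beta\ge2}$. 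The PDE reads
\[
\sin^2\theta\,u_{nn} \;=\; u_t-Av-2a^{n1}u_{n1}-a^{11}u_{11}-\sum_{\alpha\ge2}u_{\alpha\alpha}-(\text{curvature terms}),
\]
so ``solving for $u_{nn}$'' leaves the uncontrolled quantity $\sum_{\alpha\ge2}u_{\alpha\alpha}$, and when you insert this into $u_nu_{nn}+u_1u_{n1}\le0$ you pick up a term $\cos\theta\,v\sum_{\alpha\ge2}u_{\alpha\alpha}$ with no sign and no a priori bound. Making $\|\cos\theta\|_{C^2}$ small shrinks its coefficient but does not bound the factor $\sum_{\alpha\ge2}u_{\alpha\alpha}$, so the ``schematic inequality'' you wrote omits a genuinely uncontrolled term. (Note that this is consistent with the fact that in the pure Neumann case $\cos\theta\equiv0$ the bad term drops out entirely --- which is why Huisken's argument does go through --- but here $\cos\theta$ is small, not zero.)

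This is exactly the obstruction the paper avoids by switching test functions: following Ma--Wang--Wei, the proof of Theorem~\ref{Thm1.2} works with $\varphi=\log w+\alpha\tilde h$, $w=v-u_k\tilde h_k\cos\theta$, where $\tilde h$ is a strictly convex function adapted to $\Omega$. The correction $-u_k\tilde h_k\cos\theta$ is chosen so that, at a boundary maximum of $\varphi$, the normal derivative $\varphi_\gamma$ can be estimated without ever isolating $u_{nn}$, and in the interior the strict convexity of $\tilde h$ supplies the positive term that the Hopf--type calculation on $|Du|^2$ cannot. If you want to salvage a direct $|Du|^2$ argument you would have to either restrict to $n=2$, or add a device (such as the $\tilde h$--term above, or some other perturbation of $v$) that removes the dependence on the uncontrolled tangential Hessian block.
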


\begin{Rem} \rm The  proof of Theorem~\ref{Thm1.2} is based on the maximum principle method by choosing a suitable auxiliary function.
Though many auxiliary functions can be seen  to get the gradient bound for such problems in the article or the references therein,
the key point for us is to get the uniform  gradient bound (independent of the time $t$). Ma, Wang and Wei in \cite{MWW}
initially gave  the uniform  gradient bound for Neumann boundary value problem, which depends on the strict convexity of the domain $\Omega$. Here
we follow the idea of \cite{MWW} by choosing the  auxiliary function (slightly different from theirs)
$$\varphi(x,t):= \log w+\alpha \tilde{h},$$
where $w :=v-u_k\tilde{h}_k\cos\theta$, $\alpha$ is a positive constant and $\tilde{h}$
 is a function defined over $\overline{\Omega}$. As the detailed proof is similar to theirs or can be seen in our arXiv: 2210.16475 version,  we omit it here.
\end{Rem}

\subsection{Uniform $|Du|$-bounds when $\Omega\subset \R^n$ is a general domain}

Let $\Omega$ be a bounded smooth domain in $\mathbb{R}^n$ with $\partial \Omega\in C^{3}$.
Recall that the distance function
\begin{align*}
 d(x):=\texttt{dist}(x,\partial \Omega)
 \end{align*}
 is smooth near $\partial\Omega$ and $Dd=\gamma$ on $\partial\Omega$.
Moreover, we assume that $d$ is
extended to be a smooth function over $\overline{\Omega}$ satisfying $d\ge 0$ and $|Dd|\le 1$ in $\overline{\Omega}$.

Our next theorem gives a uniform gradient bound for the solutions to \eqref{main-equ1} when
$\Omega$ is not a convex domain. Though its proof is also to use the maximum principle, the
auxiliary function we will construct is different from the previous one and so
the proofs are independent of each other. Compared to the convex case, the non-convex one is more novel. Hence  we will give a more detailed proof in the following.

\begin{Thm}\label{Thm1.3}
Assume $\Omega\subset \mathbb{R}^n\ (n\geq 2)$ is a bounded domain (does not have to be convex) with $\partial \Omega \in C^3$.
Let $u\in C^{3,2}(\overline\Omega\times[0,T])$   be a solution to the problem \eqref{main-equ1}
in the time interval $[0,T]$ for some $T\in (0,\infty]$.
If $\theta\in C^2(\overline{\Omega})$ and
$|A|\gg 1$, then
\begin{equation}\nonumber
\sup_{\Omega\times[0,T]} |Du|\leq C_3 (n,\Omega, u_0, S_0, \|\theta\|_{C^2(\overline{\Omega})}).
\end{equation}
\end{Thm}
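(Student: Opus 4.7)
The plan is to apply the parabolic maximum principle to an auxiliary function that combines $\log v$ with a carefully chosen function of the distance $d(x)=\mathrm{dist}(x,\partial\Omega)$ and a correction encoding the contact angle. The crucial observation is that, via the driving term $Av$ in \eqref{main-equ1}, the evolution equation for $v=\sqrt{1+|Du|^2}$ inherits a term of the schematic form $A\langle Du,Dv\rangle/v$; when $|A|\gg 1$ this term is large enough to absorb the geometric contributions arising from $D^2 d$, which, in the non-convex setting, need not have a favorable sign. Without loss of generality I would assume $A>0$, since the case $A<0$ follows by the reflection $(u,\theta,A)\mapsto(-u,\pi-\theta,-A)$, which preserves the system.

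Following the Ma--Wang--Wei-type construction used in the convex Theorem~\ref{Thm1.2}, I would work with
\begin{equation*}
\varphi(x,t)=\log w(x,t)+\alpha\,\tilde h(d(x)),\qquad w:=v-u_k\tilde h_k\cos\theta,
\end{equation*}
where $\tilde h$ is a smooth monotone function of $d$ (for instance $\tilde h(d)=1-e^{-\beta d}$) and $\alpha>0$ is a constant to be determined. The boundary correction $u_k\tilde h_k\cos\theta$ is inserted so that on $\partial\Omega$ the normal derivative of $w$ can be evaluated explicitly using $u_\gamma=-\cos\theta\,v$. If $\varphi$ attained its maximum at $(x_0,t_0)\in\partial\Omega\times(0,T]$, I would choose a local frame with $\gamma(x_0)$ as inner normal, use $Dd|_{\partial\Omega}=\gamma$, differentiate the contact-angle boundary relation, and verify that for a suitable choice of $\tilde h$ and $\alpha$ one has $\varphi_\gamma(x_0,t_0)>0$, contradicting the Hopf lemma; the maximum must therefore occur at $t=0$ or at an interior point.

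At an interior maximum $(x_0,t_0)$ with $t_0>0$ I use $D\varphi=0$, $\varphi_t\ge 0$, and $a^{ij}\varphi_{ij}\le 0$. A (tedious but direct) computation of $(\log w)_t-a^{ij}(\log w)_{ij}$, combined with the critical-point identities that express the first derivatives of $w$ in terms of $\alpha\tilde h'(d)Dd$, yields an inequality of the schematic form
\begin{equation*}
0\le -\frac{a^{ij}u_{ki}u_{kj}}{v^{2}}+\alpha\,a^{ij}\tilde h_{ij}+\frac{A\,u_kv_k}{v^{2}}+R,
\end{equation*}
where $R$ collects lower-order terms controlled by $\|\theta\|_{C^2}$ and $S_0$. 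The middle term involves $D^2 d$ and is the sole place where non-convexity of $\partial\Omega$ can produce negative contributions; these are controlled by taking $\tilde h$ with $|\tilde h''|$ sufficiently large. After substituting $v_k$ from $D\varphi=0$, the third term contains a factor of $A$ and, when $|A|\gg 1$, dominates all the remaining contributions unless $v(x_0,t_0)$ is already bounded by a constant depending on $(n,\Omega,u_0,S_0,\|\theta\|_{C^2})$. This gives the desired uniform estimate.

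The main obstacle is to tune $\alpha$, $\beta$, and the profile of $\tilde h$ so that two competing requirements are met simultaneously: (a) at an interior maximum, the $A$-dependent term dominates the (possibly negative) curvature term $\alpha\,a^{ij}\tilde h_{ij}$ arising from the Hessian of $d$; (b) at a boundary maximum, $\varphi_\gamma$ has the right sign to invoke Hopf. In the convex case of Theorem~\ref{Thm1.2}, item (a) comes for free from the positivity of the second fundamental form of $\partial\Omega$ and one can afford $\alpha\ll 1$; here, by contrast, the argument must genuinely exploit $|A|\gg 1$ at both the interior and the boundary step, and the threshold for $|A|$ will emerge from the quantitative balance between the non-convexity of $\Omega$, the $C^2$-norm of $\cos\theta$, and the bound on $|u_t|$ provided by Lemma~\ref{Lem1.1}.
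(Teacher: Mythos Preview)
Your general architecture---auxiliary function $\varphi=\log w+\alpha\tilde h$, three-case analysis, Hopf argument on the boundary---matches the paper. But the mechanism you isolate for the interior step is wrong, and this is a genuine gap.

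You claim the decisive term is the linear one, $Au_kv_k/v^2$, inherited from the driving force. After substituting the critical-point relations $w_i=-\alpha\tilde h_i w$, this term becomes (up to corrections) $-A\alpha\,u_k\tilde h_k/v\approx -A\alpha\,\tilde h'(d)\,(Du\cdot Dd)/v$. Its sign depends on $Du\cdot Dd$, which is not controlled at an interior maximum; so the term can be large in magnitude but have the wrong sign, and cannot ``dominate'' in the direction you need. The paper's proof does not rely on this first-order term at all. Instead it uses the equation itself, $a^{ij}u_{ij}=u_t-Av$, to eliminate $u_{11}/v^2$ via
\[
\frac{u_{11}}{v^2}=u_t-Av-\sum_{\beta\ge 2}u_{\beta\beta},
\]
which injects the zero-order expression $Av-u_t$ into the Hessian bookkeeping. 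Squaring then produces $\tfrac{1}{v}(Av-u_t)^2=A^2v+O(|A|)$, a term that is \emph{sign-definite} in $A$. After minimizing the resulting quadratic form in $(u_{\beta\beta})_{\beta\ge 2}$ one is left with a net $\tfrac{A^2}{n}v$ that, for $|A|\gg 1$, absorbs every curvature and lower-order contribution---including those with the wrong sign coming from non-convexity. This quadratic mechanism is what you are missing.

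Two smaller points: the boundary Hopf step in the paper uses only $\alpha_0$ chosen from $n$, $\|\theta\|_{C^1}$, and the second fundamental form of $\partial\Omega$; it does not use $|A|\gg 1$, contrary to your closing remark. And there is no need for a profile $\tilde h$ with large $|\tilde h''|$; the paper takes simply $\tilde h=d$, because the non-convexity is handled not by $\tilde h''$ but by the $A^2v/n$ term above.
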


\begin{proof}
Choosing the auxiliary function
$$
\varphi(x,t):= \log w+\alpha_0 d,
$$
where $w :=v+u_kd_k\cos\theta$ and $\alpha_0$ is a positive constant to be determined later.
We will derive the uniform bound for $|Du|$.

As above, we assume $\varphi(x,t)$ attains its maximum at $(x_0,t_0)\in \overline{\Omega}_T :=\overline{\Omega}\times [0,T]$, and divide the proof into three cases.

\textbf{Case (i)}: $x_0\in\partial\Omega$.

At $(x_0,t_0)$, we choose the coordinate in $\mathbb{R}^n$ so that the positive $x_n$-axis is the inner normal direction to $\partial\Omega$,
 which is exactly equal to $\gamma$.
We denote by $D'u=(u_1,\ldots,u_{n-1})$ and $u_n$ the tangential and normal part of $Du$ on the boundary by the choice of the coordinate.
Let $\frac{\partial}{\partial x_{\beta}}$ be the tangential directions, $\beta=1,\cdots,n-1$. Let $\beta, \eta=1,\cdots, n-1$ be
numbered corresponding to the coordinates of the tangential space.

Then on the boundary $\partial\Omega$,
\begin{align}\begin{split}
d_\beta=&0,\quad d_n=1,\quad
 d_{\beta n}=0,\quad d_{\beta\eta}=0,\quad\beta,\eta=1,2,\cdots,n-1.\label{d-1}
\end{split}\end{align}

First by the  boundary condition in \eqref{main-equ1}, we have
\begin{align}
w=&v+u_{n} \cos\theta=v\sin^2\theta,\,\,\,\text{on}\,\,  \partial\Omega,\label{4w-1}\\
u_n^2=&\cot^2 \theta (1+|D' u|^2),\,\,\,\text{on}\,\,  \partial\Omega.\label{4un-1}
\end{align}
Then at $x_0\in\partial\Omega$, by the Hopf lemma and the boundary condition, we can compute directly to derive
\begin{equation}\label{4varphi-n-1}
\begin{split}
 0\geq \varphi_n
=&\frac{w_n }{w}+\alpha_0 \\
=&\frac{1}{w}\big[v_n+(u_kd_k)_n\cos\theta+(\cos\theta )_n u_kd_k\big]+\alpha_0\\
=&\frac{1}{w}\big[\frac{ u_ku_{kn}}{v}+u_{kn}d_k\cos\theta+u_kd_{kn}\cos\theta+u_n(\cos\theta )_n\big]+\alpha_0\\
=&\frac{1}{w}\big[\frac{1}{v}\sum\limits_{\beta=1}^{n-1} u_{\beta}u_{\beta n} +\frac{1}{v}u_nu_{nn}+u_{nn}\cos\theta+u_nd_{nn}\cos\theta+u_n(\cos\theta )_n\big]+\alpha_0\\
=&\frac{1}{w}\big[\frac{1}{v}\sum\limits_{\beta=1}^{n-1} u_{\beta}u_{\beta n} +u_nd_{nn}\cos\theta+u_n(\cos\theta )_n\big]+\alpha_0\\
=&\frac{1}{w}\big[\frac{1}{v}\sum\limits_{\beta=1}^{n-1} u_{\beta}u_{n\beta} +\frac{1}{v}\sum\limits_{\beta,\eta=1}^{n-1}\kappa_{\beta\eta}u_{\beta}u_{\eta} +u_nd_{nn}\cos\theta+u_n(\cos\theta )_n\big]+\alpha_0.
\end{split}
\end{equation}
By the definition of $w$, we obtain
\begin{equation}\label{4v-beta-1}
\begin{split}
0=&\varphi_{\beta}=\frac{w_\beta}{w}=\frac{[v+(u_kd_k)\cos\theta]_\beta}{w},\\
 0=&w_\beta=v_\beta+u_{n\beta}\cos\theta+u_n(\cos\theta)_\beta,
\end{split}
\end{equation}
which implies that
\begin{equation}\label{4vbeta-1}
\begin{split}
v_\beta=&-u_{n\beta}\cos\theta-u_n(\cos\theta)_\beta.
\end{split}
\end{equation}
Differentiating the boundary condition $u_n=-\cos\theta v$ along the tangential directions, then
\begin{equation}\label{4vbeta-2}\begin{split}
u_{n\beta}=&(-\cos\theta v)_\beta=-(\cos\theta)_{\beta} v-\cos\theta v_\beta.
\end{split}\end{equation}
From \eqref{4vbeta-1}, \eqref{4vbeta-2} and the boundary condition in \eqref{main-equ1}, we deduce that
\begin{equation}\label{4unbeta-1}\begin{split}
 u_{n\beta}=&-\csc^2\theta(\cos\theta)_{\beta} v+u_n\csc^2\theta\cos\theta (\cos\theta)_\beta=v\csc\theta(1+\cos^2\theta)\theta_\beta.
\end{split}\end{equation}
Inserting \eqref{4unbeta-1} into \eqref{4varphi-n-1}, we have
\begin{equation}\label{4varphi-n-3}
\begin{split}
 0\geq \varphi_n
=&\frac{1}{w}\big[\csc\theta(1+\cos^2\theta)\sum\limits_{\beta=1}^{n-1} u_{\beta}\theta_\beta
+\frac{1}{v}\sum\limits_{\beta,\eta=1}^{n-1}\kappa_{\beta\eta}u_{\beta}u_{\eta} \\&+u_nd_{nn}\cos\theta+(\cos\theta )_nu_n\big]+\alpha_0\\
=&\frac{\csc\theta(1+\cos^2\theta)}{w}\sum\limits_{\beta=1}^{n-1} u_{\beta}\theta_{\beta}
+
\frac{1}{wv}\sum\limits_{\beta,\eta=1}^{n-1}\kappa_{\beta\eta}u_{\beta}u_{\eta}-d_{nn}\cot^2\theta+\cot\theta\theta_n+\alpha_0\\
\ge&\alpha_0-\frac{2(n-1)|Du|}{w}\csc\theta \|\theta\|_{C^1(\partial\Omega)}-|\cot\theta||\theta_n|
-\frac{1}{wv}|\kappa_{0}||D'u|^2-|d_{nn}|\cot^2\theta\\
\ge&\alpha_0-\frac{(2n-1)}{\sin^3\theta} \|\theta\|_{C^1(\partial\Omega)}
-\frac{1}{wv}|\kappa_{0}||D'u|^2-|d_{nn}|\cot^2\theta\\
\ge&\alpha_0-\frac{(2n-1)}{\sin^3\theta} \|\theta\|_{C^1(\partial\Omega)}
-\frac{\kappa_{0}}{\sin^2\theta}-\frac{|D^2d|}{\sin^2\theta}\\
\ge&1,
\end{split}
\end{equation}
where we have chosen $\kappa_0>0$ satisfying $-\kappa_0\delta_{\beta\eta}\le\kappa_{\beta\eta}\le\kappa_0\delta_{\beta\eta}$ and
$$
\frac{\alpha_0}{2} =
1+\frac{2n-1}{\inf\limits_{\partial\Omega}\sin^3\theta} \|\theta\|_{C^1(\partial\Omega)}
+\frac{\kappa_{0}}{\inf\limits_{\partial\Omega}\sin^2\theta}+\frac{|D^2d|}{\inf\limits_{\partial\Omega}\sin^2\theta}.
$$
Therefore, $\varphi$ cannot attain its maximum on $\partial\Omega$.

\textbf{Case (ii)}: $x_0\in \Omega$ and $t_0=0$, then we have
\begin{equation}\nonumber
\varphi(x,t)\leq \varphi(x_0,0)=\log(\sqrt{1+|Du_0|^2}+(u_0)_{k}d_k\cos\theta)+\alpha_0 d\leq C(n,\Omega,u_0, S_0 ),
\end{equation}
and
\begin{equation}
\sup_{\Omega\times[0,T]} |Du|\leq C(n,\Omega,u_0, S_0 ).
\end{equation}

\textbf{Case (iii)}: $x_0\in \Omega$ and $T\geq t_0>0$. Then
at $(x_0,t_0)$,  we have
\begin{equation}\label{4varphii}
\begin{split}
0=&\varphi_i =\frac{w_i}{w}+\alpha_0 d_i,\\
0\le&\varphi_t =\frac{w_t}{w}.
\end{split}\end{equation}
Thus
\begin{equation}\label{4varphii=0-wi}
\begin{split}
w_i=&-\alpha_0 d_iw,\quad \text{for}\quad 1\leq i\leq n,
\end{split}\end{equation}
and
\begin{equation}\label{4aijvarphiij-varphit-1}
\begin{split}
0 \ge& a^{ij}\varphi_{ij}-\varphi_t\\
=& \frac{1}{w}(a^{ij}w_{ij}-w_t)-\frac{a^{ij}w_iw_j}{w^2}+\alpha_0  a^{ij}d_{ij}
\\
=&\frac{1}{w}(a^{ij}w_{ij}-w_t)+\alpha_0  a^{ij}d_{ij}- \alpha_0^2a^{ij}d_id_j.
\end{split}
\end{equation}
At $(x_0,t_0)$,  rotating the coordinate $(x_1,x_2,\ldots,x_n)$ such that
\begin{equation}\nonumber
|Du|=u_1>0,\quad\text{and}\quad \{u_{\alpha\beta}\}_{2\leq \alpha,\beta\leq n}\quad \text{is diagonal}.
\end{equation}
(If $u_1(x_0,t_0)=0$, then  the gradient bound is easily to be obtained.
As we can see $\forall  (x,t)\in \Omega\times (0,T]$, $\varphi(x,t)\le\varphi(x_0,t_0)=\alpha_0 d(x_0)$, we have
$$\log w+\alpha_0 d\le\alpha_0 d(x_0),\quad \text{and}\quad w\le e^{\alpha_0 d(x_0)}.$$
By the definition of $w$, we observe that $(1- S_0 )v\le w\le 2v$. So $v$ is bounded.)

Then it follows that at $(x_0,t_0)$,
\begin{equation}\label{4aij-3}
a^{11}=\frac{1}{v^2};\,\,a^{ii}=1,\quad\text{for}\quad i\geq 2;\,\,a^{ij}=0,\quad\text{for}\quad i\neq j.
\end{equation}
By \eqref{4varphii=0-wi}, for $i\ge1$ we have
\begin{equation}\label{4varphii=0-wi-2}\begin{split}
-\alpha_0 d_i w=w_i=&v_i+u_{ki}d_k\cos\theta+(d_k\cos\theta )_i u_k\\
= &\frac{u_ku_{ki}}{v}+\cos\theta d_ku_{ki}+u_1(d_1\cos\theta )_i\\
=&S_ku_{ki}+u_1(d_1\cos\theta )_i,
\end{split}\end{equation}
where we set
\begin{equation}\label{4S-k-1}
S_k :=\frac{u_k}{v}+\cos\theta d_k,\,\, 1\le k \le n.
\end{equation}
Then, by \eqref{4varphii=0-wi-2} we have
\begin{equation}\label{4skuki-1}\begin{split}
S_ku_{ki}=&-u_1(d_1\cos\theta )_i-w\alpha_0 d_i .
\end{split}\end{equation}
For $i=1$,  it follows that
\begin{equation}\label{4u11-1}
\begin{split}
S_ku_{k1}=&-u_1(d_1\cos\theta )_1- w\alpha_0 d_1,
\end{split}
\end{equation}
 and for  $i\ge 2$, we replace $i$ by $\beta\ge2$ to obtain
\begin{align}
S_ku_{k\beta}=&-u_1(d_1\cos\theta )_\beta -w\alpha_0 d_\beta ,\label{4Skukbeta-1}\\
S_1u_{1\beta}=&-S_\beta u_{\beta\beta}-u_1(d_{1}\cos\theta)_\beta -w\alpha_0 d_\beta.\label{4S1u1beta-1}
\end{align}

Next we shall calculate $\frac{1}{w}(a^{ij}w_{ij}-w_t)$. Firstly we differentiate $w$ to obtain
\begin{equation}\label{4wi-1}\begin{split}
w_i
= &\frac{u_ku_{ki}}{v}+u_{ki}d_k\cos\theta+u_k(d_k\cos\theta )_i=S_ku_{ki}+u_k(d_k\cos\theta )_i ,\\
w_t= &\frac{u_ku_{kt}}{v}+\cos\theta d_ku_{kt}=S_ku_{kt},
\end{split}\end{equation}
and
\begin{equation}\label{4wij-1}\begin{split}
w_{ij}=& \frac{S_ku_{kij}}{v}+\frac{u_{ki}u_{kj}}{v}-\frac{u_ku_{ki}u_mu_{mj}}{v^3}
+u_{kj}(d_k\cos\theta )_i +u_{ki}(d_k\cos\theta)_j+u_k(d_k\cos\theta )_{ij}
\\
=& S_ku_{kij}+\frac{u_{ki}u_{kj}}{v}-\frac{u_1^2u_{1i}u_{1j}}{v^3}
+u_{kj}(d_k\cos\theta )_i +u_{ki}(d_k\cos\theta)_j+u_1(d_1\cos\theta )_{ij} .
\end{split}\end{equation}
Thus from \eqref{4wi-1} and \eqref{4wij-1} we have
\begin{equation}\label{4aijwij-wt-1}\begin{split}
a^{ij}w_{ij}-w_t
=& S_k(a^{ij}u_{ijk}-u_{kt})+\frac{a^{ij}u_{ki}u_{kj}}{v}-\frac{u_1^2}{v^3}a^{ij}u_{1i}u_{1j}\\&
+2 a^{ij}(d_k\cos\theta )_i u_{kj}+u_1a^{ij}(d_1\cos\theta )_{ij}.
\end{split}\end{equation}
Since
\begin{equation}\label{reequ}
a^{ij}u_{ij}+Av=u_t,
\end{equation}
we have
\begin{equation}\label{u11v2-1}
\frac{u_{11}}{v^2}=-\sum_{\beta=2}^nu_{\beta\beta}+u_t-Av.
\end{equation}
Differentiating \eqref{reequ} in $x_k$ for $k\ge 1$,  we have
\begin{align}\label{reequ-diff-1}
a^{ij}u_{ijk}-u_{tk}=&-a^{ij}_{p_m}u_{mk}u_{ij}-\frac{Au_{1}}{v}u_{1k}.
\end{align}
Therefore, by \eqref{aij-pm}, \eqref{reequ-diff-1} becomes
\begin{align}\label{reequ-diff-2}\begin{split}
a^{ij}u_{ijk}-u_{tk}
=&\frac{2}{v^2}a^{ij}u_{kj}u_{im}u_{m}-\frac{Au_{1}}{v}u_{1k}\\
=&\frac{2u_{1}}{v^2}a^{ij}u_{kj}u_{1i}-\frac{Au_{1}}{v}u_{1k}.
\end{split}\end{align}
Inserting \eqref{reequ-diff-2}  into \eqref{4aijwij-wt-1},  we have
\begin{equation}\label{4aijwij-wt-2}\begin{split}
a^{ij}w_{ij}-w_t
=& \frac{2u_{1}}{v^2}a^{ij}S_ku_{kj}u_{1i}+\frac{a^{ij}u_{ki}u_{kj}}{v}-\frac{u_1^2}{v^3}a^{ij}u_{1i}u_{1j}
+2 a^{ij}(d_k\cos\theta )_i u_{kj}\\&-\frac{Au_{1}}{v}S_ku_{k1}
+u_1a^{ij}(d_1\cos\theta )_{ij}.
\end{split}\end{equation}
By \eqref{4skuki-1}, \eqref{4S1u1beta-1}  and \eqref{reequ}, we have
\begin{equation}\label{4aijwij-wt-3}\begin{split}
a^{ij}w_{ij}-w_t
=& \frac{1}{v}\sum_{\beta=2}^nu^2_{\beta \beta}+2\sum_{\beta=2}^n(d_\beta\cos\theta )_\beta u_{\beta\beta}-\frac{Au_{1}}{v}S_ku_{k1}+u_1a^{ij}(d_1\cos\theta )_{ij}\\&+\frac{2}{v^3}\sum_{\beta=2}^nu^2_{1\beta }+2\sum_{\beta=2}^n\big[(d_1\cos\theta )_\beta+\frac{(d_\beta\cos\theta )_1}{v^2}\big] u_{1\beta}\\&
+\frac{1}{v}(\frac{u_{11}}{v^2})^2+\big[\frac{2u_{1}}{v^2}S_ku_{k1}+2(d_1\cos\theta )_1\big]\frac{u_{11}}{v^2}
+\frac{2u_{1}}{v^2}\sum_{\beta=2}^nS_ku_{k\beta}u_{1\beta}.
\end{split}\end{equation}
In the following, we need to calculate the last three formulas in \eqref{4aijwij-wt-3}.
By \eqref{u11v2-1}, \eqref{4S1u1beta-1} and \eqref{4u11-1}, we have
\begin{equation}\label{4aijwij-wt-3-1}\begin{split}
 &\frac{1}{v}(\frac{u_{11}}{v^2})^2
 +\big[\frac{2u_{1}}{v^2}S_ku_{k1}+2(d_1\cos\theta )_1\big]\frac{u_{11}}{v^2}+\frac{2u_{1}}{v^2}\sum_{\beta=2}^nS_ku_{k\beta}u_{1\beta}
\\
=&\frac{1}{v}\big[\sum_{\beta=2}^nu_{\beta\beta}+Av-u_t\big]^2+\big[\frac{2u_{1}}{v^2}S_ku_{k1}+2(d_1\cos\theta )_1\big]\big[u_t-Av-\sum_{\beta=2}^nu_{\beta\beta}\big]\\&
-\frac{2u_{1}}{v^2}\sum_{\beta=2}^n\big[u_1(d_1\cos\theta )_\beta +w\alpha_0 d_\beta\big]u_{1\beta}\\
=&\frac{1}{v}\big[\sum_{\beta=2}^nu_{\beta\beta}\big]^2+\big[2A-\frac{2u_t}{v}
+\frac{2u_{1}w}{v^2}\alpha_0d_1-\frac{2}{v^2}(d_1\cos\theta )_1\big]\sum_{\beta=2}^nu_{\beta\beta}\\&
+\frac{2\alpha_0u_{1}w }{S_1v^2}\sum_{\beta=2}^nd_\beta S_\beta u_{\beta\beta}
-\frac{2u^2_{1}}{v^2}\sum_{\beta=2}^n(d_1\cos\theta )_\beta u_{1\beta}\\&
+\big[\frac{2}{v^2}(d_1\cos\theta )_1-\frac{2u_{1}w}{v^2}\alpha_0d_1\big]\big[u_t-Av\big]+\frac{1}{v}\big[Av-u_t\big]^2\\&
+\frac{2\alpha_0u_{1}w }{S_1v^2}\sum_{\beta=2}^nd_\beta \big[u_1(d_{1}\cos\theta)_\beta +w\alpha_0 d_\beta\big].
\end{split}\end{equation}
Inserting \eqref{4aijwij-wt-3-1} into \eqref{4aijwij-wt-3}, we have
\begin{equation}\label{4aijwij-wt-4}\begin{split}
a^{ij}w_{ij}-w_t
=&\frac{1}{v}\big[\sum_{\beta=2}^nu_{\beta\beta}\big]^2+\frac{1}{v}\sum_{\beta=2}^nu^2_{\beta \beta}+2\sum_{\beta=2}^n\big[(d_\beta\cos\theta )_\beta
+\frac{\alpha_0u_{1}w }{S_1v^2}d_\beta S_\beta\big] u_{\beta\beta}\\&+\big[2A-\frac{2u_t}{v}+\frac{2u_{1}w}{v^2}\alpha_0d_1-\frac{2}{v^2}(d_1\cos\theta )_1\big]\sum_{\beta=2}^nu_{\beta\beta}\\&+\frac{2}{v^3}\sum_{\beta=2}^nu^2_{1\beta }
+\frac{2}{v^2}\sum_{\beta=2}^n\big[(d_\beta\cos\theta )_1+(d_1\cos\theta )_\beta\big]u_{1\beta}
+\frac{1}{v}\big[Av-u_t\big]^2\\&+\big[\frac{2}{v^2}(d_1\cos\theta )_1-\frac{2u_{1}w}{v^2}\alpha_0d_1\big]\big[u_t-Av\big]
+\frac{Au_{1}}{v}\big[u_1(d_1\cos\theta )_1+w\alpha_0 d_1\big]\\&
+\frac{2\alpha_0u_{1}w }{S_1v^2}\sum_{\beta=2}^nd_\beta \big[u_1(d_{1}\cos\theta)_\beta +w\alpha_0 d_\beta\big]
+u_1a^{ij}(d_1\cos\theta )_{ij}.
\end{split}\end{equation}
Then inserting \eqref{4aijwij-wt-4} into \eqref{4aijvarphiij-varphit-1}, we have
\begin{equation}\label{4aijvarphiij-varphit-2}
\begin{split}
0 \ge& a^{ij}\varphi_{ij}-\varphi_t\\
\ge&\frac{1}{w}\bigg\{\frac{1}{v}\big[\sum_{\beta=2}^nu_{\beta\beta}\big]^2+\frac{1}{v}\sum_{\beta=2}^nu^2_{\beta \beta}+2\sum_{\beta=2}^n\big[(d_\beta\cos\theta )_\beta+\frac{\alpha_0u_{1}w }{S_1v^2}d_\beta S_\beta\big] u_{\beta\beta}\\&
\qquad+\big[2A-\frac{2u_t}{v}+\frac{2u_{1}w}{v^2}\alpha_0d_1-\frac{2}{v^2}(d_1\cos\theta )_1\big]\sum_{\beta=2}^nu_{\beta\beta}+\frac{2}{v^3}\sum_{\beta=2}^nu^2_{1\beta }
\\&
\qquad+\frac{2}{v^2}\sum_{\beta=2}^n\big[(d_\beta\cos\theta )_1+(d_1\cos\theta )_\beta\big]u_{1\beta}
+\frac{1}{v}\big[Av-u_t\big]^2\\&
\qquad+\big[\frac{2}{v^2}(d_1\cos\theta )_1-\frac{2u_{1}w}{v^2}\alpha_0d_1\big]\big[u_t-Av\big]
+\frac{Au_{1}}{v}\big[u_1(d_1\cos\theta )_1+w\alpha_0 d_1\big]\\&
\qquad+\frac{2\alpha_0u_{1}w }{S_1v^2}\sum_{\beta=2}^nd_\beta \big[u_1(d_{1}\cos\theta)_\beta +w\alpha_0 d_\beta\big]
+u_1a^{ij}(d_1\cos\theta )_{ij}+\alpha_0 w a^{ij}d_{ij}\\&
\qquad-\alpha_0^2wa^{ij}d_id_j\bigg\}\\
:=&\frac{1}{w}\bigg\{\mathcal  I+\mathcal { II}+\mathcal  {III}\bigg\},
\end{split}
\end{equation}
where
\begin{equation}\label{4I-1}\begin{split}
\mathcal  I :=&\frac{1}{v}\big[\sum_{\beta=2}^nu_{\beta\beta}\big]^2+\frac{1}{v}\sum_{\beta=2}^nu^2_{\beta \beta}+ 2\sum_{\beta=2}^nb_{\beta}u_{\beta\beta},
\end{split}\end{equation}
and
\begin{equation}\label{4bbeta}\begin{split}
b_\beta=&\big[A-\frac{u_t}{v}+\frac{u_{1}w}{v^2}\alpha_0d_1-\frac{1}{v^2}(d_1\cos\theta )_1\big]
+\big[(d_\beta\cos\theta )_\beta+\frac{\alpha_0u_{1}w }{S_1v^2}d_\beta S_\beta\big];
\end{split}\end{equation}
\begin{equation}\label{4II-1}\begin{split}
\mathcal { II} :=&\frac{2}{v^3}\sum_{\beta=2}^nu^2_{1\beta }+\frac{2}{v^2}\sum_{\beta=2}^n\big[(d_\beta\cos\theta )_1+(d_1\cos\theta )_\beta\big]u_{1\beta}\\
\ge&-\frac{1}{2v}\sum_{\beta=2}^n[(d_\beta\cos\theta )_1+(d_1\cos\theta )_\beta]^2\\
\ge &-C(n, |D^2d|,\|\theta\|_{C^2(\overline{\Omega})})v^{-1};
\end{split}\end{equation}
and
\begin{equation}\label{4III-1}\begin{split}
\mathcal  {III} :=&\frac{1}{v}\big[Av-u_t\big]^2+\big[\frac{2}{v^2}(d_1\cos\theta )_1-\frac{2u_{1}w}{v^2}\alpha_0d_1\big]\big[u_t-Av\big]\\&
+\frac{Au_{1}}{v}\big[u_1(d_1\cos\theta )_1- w\alpha_0 d_1\big]
+\frac{2\alpha_0u_{1}w }{S_1v^2}\sum_{\beta=2}^nd_\beta \big[u_1(d_{1}\cos\theta)_\beta +w\alpha_0 d_\beta\big]\\&
+u_1a^{ij}(d_1\cos\theta )_{ij}+\alpha_0 w a^{ij}d_{ij}- \alpha_0^2wa^{ij}d_id_j\\
\ge&\frac{1}{v}\big[Av-u_t\big]^2+\big[\frac{2}{v^2}(d_1\cos\theta )_1-\frac{2u_{1}w}{v^2}\alpha_0d_1\big]\big[u_t-Av\big]\\&
+\frac{Au_{1}}{v}\big[u_1(d_1\cos\theta )_1+w\alpha_0 d_1\big]-C(n, S_0 ,|D^3d|, \|\theta\|_{C^2(\overline{\Omega})})v\\
\ge&A^2v+A\big[\frac{u^2_{1}-2}{v}(d_1\cos\theta )_1+\frac{3u_{1}w}{v}\alpha_0 d_1-2u_t\big]\\&-C(n, S_0 ,u_0,|D^3d|,\|\theta\|_{C^2(\overline{\Omega})})v.
\end{split}\end{equation}

Next we will estimate the term $\mathcal  I$. It is easy to see that $\mathcal  I$ is a quadratic polynomial of $u_{\beta\beta},\,\,2\le\beta\le n$.
 We will calculate the minimum of  $\mathcal I$.
Consider the function $\tilde{I}:\mathbb R^{n-1}\rightarrow \mathbb R$ defined by
\begin{equation}\label{Ip}
\tilde{I}(p)=\frac{1}{v}\big(\sum_{i=2}^np_i\big)^2+\frac1v\sum_{i=2}^np_i^2+2\sum_{i=2}^n b_ip_i,\quad p=(p_2,\cdots,p_n)\in\mathbb R^{n-1}.\end{equation}
At the minimum point of $I$, we have
\begin{equation}0=\frac{\partial \tilde{I}}{\partial p_i}=\frac 2v\sum_{j=2}^np_j+\frac2vp_i+2b_i,\quad 2\leq i\leq n.\end{equation}
It follows that
\begin{align}\label{pj}
\sum_{j=2}^np_j=&-\frac vn\sum_{j=2}^nb_j,\\
p_i=&-\sum_{j=2}^np_j-vb_i=\frac vn\sum_{j=2}^nb_j-vb_i,\quad 2\leq i\leq n.\label{pi}
\end{align}
So inserting  \eqref{pj} and \eqref{pi} into \eqref{Ip}, we have
\begin{equation}\label{2023eq1}\begin{aligned}
\tilde{I}_{\mathrm{min}}=&\frac{1}{v}\big[\frac{v}{n} \sum_{\beta=2}^nb_{\beta}\big]^2
+\frac{1}{v}\sum_{\beta=2}^n\big[\frac{v}{n} \sum_{\beta=2}^nb_{\beta}-vb_{\beta}\big]^2
+\sum_{\beta=2}^n2b_{\beta}\big[\frac{v}{n} \sum_{\beta=2}^nb_{\beta}-vb_{\beta}\big]\\
=&\frac{v}{n}\big[ \sum_{\beta=2}^nb_{\beta}\big]^2-v\sum_{\beta=2}^nb_{\beta}^2.
\end{aligned}\end{equation}
From \eqref{4bbeta}, we have
\begin{align}\label{4bbeta-1}\begin{split}
\sum_{\beta=2}^nb_\beta
=&(n-1)\big[A-\frac{u_t}{v}+\frac{u_{1}w}{v^2}\alpha_0d_1-\frac{(d_1\cos\theta )_1}{v^2}\big]+\big[\sum_{\beta=2}^n(d_\beta\cos\theta )_\beta+\frac{\alpha_0u_{1}w }{S_1v^2}\sum_{\beta=2}^nd_\beta S_\beta\big].
\end{split}\end{align}
Then  combining \eqref{4bbeta}, \eqref{4bbeta-1} and \eqref{2023eq1}, it follows that
\begin{equation}\label{4I-2}\begin{split}
\mathcal I_{\min}
=&-\frac{(n-1)v}{n}[A-\frac{u_t}{v}+\frac{u_{1}w}{v^2}\alpha_0d_1-\frac{1}{v^2}(d_1\cos\theta )_1]^2\\&
-\frac{2v}{n}[A-\frac{u_t}{v}+\frac{u_{1}w}{v^2}\alpha_0d_1-\frac{1}{v^2}(d_1\cos\theta )_1]
\sum_{\beta=2}^n[(d_\beta\cos\theta )_\beta+\frac{\alpha_0u_{1}w }{S_1v^2}d_\beta S_\beta]\\&
+\frac{v}{n}[\sum_{\beta=2}^n\big((d_\beta\cos\theta )_\beta+\frac{\alpha_0u_{1}w }{S_1v^2}d_\beta S_\beta\big)]^2
-v\sum_{\beta=2}^n[(d_\beta\cos\theta )_\beta+\frac{\alpha_0u_{1}w }{S_1v^2}d_\beta S_\beta]^2\\
\ge&-\frac{(n-1)v}{n}A^2-\frac{2A(n-1)v}{n}\big[\frac{u_{1}w}{v^2}\alpha_0d_1-\frac{1}{v^2}(d_1\cos\theta )_1-\frac{u_t}{v}\big]\\&
-\frac{2Av}{n}\sum_{\beta=2}^n\big[(d_\beta\cos\theta )_\beta+\frac{\alpha_0u_{1}w }{S_1v^2}d_\beta S_\beta\big]
-C(n, u_0,  S_0 ,|D^2d|,\|\theta\|_{C^2(\overline{\Omega})})v.
\end{split}\end{equation}
Inserting \eqref{4I-2}, \eqref{4II-1} and \eqref{4III-1} into \eqref{4aijvarphiij-varphit-2}, we have
\begin{equation}\label{4aijvarphiij-varphit-3}
\begin{split}
0 \ge& a^{ij}\varphi_{ij}-\varphi_t\\
\ge&\frac{1}{w}\bigg\{-\frac{(n-1)v}{n}A^2-\frac{2A(n-1)v}{n}\big[\frac{u_{1}w}{v^2}\alpha_0d_1-\frac{1}{v^2}(d_1\cos\theta )_1-\frac{u_t}{v}\big]\\&
\qquad-\frac{2Av}{n}\sum_{\beta=2}^n\big[(d_\beta\cos\theta )_\beta+\frac{\alpha_0u_{1}w }{S_1v^2}d_\beta S_\beta\big]+A^2v\\&\qquad
+A\big[\frac{u^2_{1}-2}{v}(d_1\cos\theta )_1+\frac{u_{1}w}{v}\alpha_0 d_1-2u_t\big]\\&\qquad
-C(n,u_0, S_0 ,|D^3d|, \|\theta\|_{C^2(\overline{\Omega})})v-C(n, |D^2d|, \|\theta\|_{C^2(\overline{\Omega})})v^{-1}\bigg\}\\
\ge
&\frac{1}{w}\bigg\{\frac{A^2v}{n}
-\frac{2Av}{n}\sum_{\beta=2}^n\big[(d_\beta\cos\theta )_\beta+\frac{\alpha_0u_{1}w }{S_1v^2}d_\beta S_\beta\big]\\&\qquad
+A\big[(\frac{u^2_{1}}{v}-\frac{2}{nv})(d_1\cos\theta )_1+\frac{n+2}{n}\frac{u_{1}w}{v}\alpha_0 d_1-\frac{2}{n}u_t\big]\\&\qquad-C(n,u_0, S_0 ,|D^3d|, \|\theta\|_{C^2(\overline{\Omega})})v-C(n, |D^2d|, \|\theta\|_{C^2(\overline{\Omega})})v^{-1}\bigg\}\\
\ge&\frac{1}{w}\bigg\{\frac{A^2v}{2n}
-C_7(n,u_0, S_0 ,|D^3d|,\|\theta\|_{C^2(\overline{\Omega})})v-C_8(n, |D^2d|,\|\theta\|_{C^2(\overline{\Omega})})\bigg\}\\
\ge&\frac{1}{w}\bigg\{\frac{A^2v}{4n}
-C_9(n, |D^2d|,\|\theta\|_{C^2(\overline{\Omega})})\bigg\},
\end{split}
\end{equation}
where we have chosen $\frac{A^2}{4n}= C_7 (n,u_0, S_0 ,|D^3d|,\|\theta\|_{C^2(\overline{\Omega})})$ and have used the inequality $(1- S_0 )v\le w\le 2v$.
Thus we obtain the gradient bound.

Thus we complete the proof of Theorem~\ref{Thm1.3}.
\end{proof}

{\bf Acknowledgement}. The authors would like to thank Professor Xi-Nan Ma for valuable discussions and also express  their sincere thanks to the referee for his/her
 careful review and valuable comments.

{\bf No conflict of interest statement}. On behalf of all authors, the corresponding author states that there is no conflict of interest.

{\bf Data availability statement}. All data generated or analysed during this study are included in this article.

\end{document}